\begin{document}
\title{Multivariate Generalized Linear-Statistics of short range dependent data}
\author{Svenja Fischer \thanks{svenja.fischer@rub.de, Faculty for Civil and Environmental Engineering, Ruhr-University Bochum, Germany}
\and Roland Fried \thanks{fried@statistik.tu-dortmund.de, Faculty of Statistics, TU Dortmund University, Germany}
\and Martin Wendler\thanks{martin.wendler@rub.de, Faculty of Mathematics, Ruhr-University Bochum, Germany }}

\date{}
\maketitle
\bibliographystyle{plainnat}

\setlength{\parindent}{0pt}
\allowdisplaybreaks

\newtheorem{definition}{Definition}[section]
\newtheorem{theorem}{Theorem}[section]
\newtheorem{corollary}{Corollary}[section]
\newtheorem{lemma}{Lemma}[section]
\newtheorem{proposition}{Proposition}[section]

\theoremstyle{definition}
\newtheorem{example}{Example}[section]
\newtheorem{remark}{Remark}[section]

\newcommand{\var}{\operatorname{Var}}
\newcommand{\cov}{\operatorname{Cov}}
\newcommand{\med}{\operatorname{med}}

\begin{abstract}

\noindent
Generalized linear ($GL$-) statistics are defined as functionals of an $U$-quantile process and unify different classes of statistics such as $U$-statistics and $L$-statistics. We derive a central limit theorem for $GL$-statistics of strongly mixing sequences and arbitrary dimension of the underlying kernel. For this purpose we establish a limit theorem for $U$-statistics and an invariance principle for $U$-processes together with a convergence rate for the remaining term of the Bahadur representation. 

\noindent
An application is given by the generalized median estimator for the tail-parameter of the Pareto distribution, which is commonly used to model exceedances of high thresholds.
 We use subsampling to calculate confidence intervals and investigate its behaviour under independence and strong mixing in simulations.

\noindent
KEYWORDS: $GL$-Statistics; $U$-Statistics; Strong mixing; Generalized Median Estimator.

\noindent
MSC 62G30 \and MSC 60G10 \and MSC 60F1
\end{abstract}

\section{Introduction}
\label{Intro}

Generalized linear statistics ($GL$-statistics) form a broad class of statistics, which unifies not only the widely used $U$-statistics but also other classes like $L$-statistics and even statistics which cannot be assigned to a certain class. $GL$-statistics were first developed by \cite{Serf1984}, who shows a central limit theorem under independence. In this paper we develop results for $GL$-statistics of random variables which are short range dependent. An important tool to gain a Central Limit Theorem for $GL$-statistics are $U$-statistics with multivariate kernels. Up to now we can find a lot of results for bivariate $U$-statistics of short range dependent data (cf. \cite{Boro}, \cite{Deh2010} and \cite{Wen2011}) but in the multivariate case there occur some additional difficulties caused by the dependencies in the kernel structure. 

\vspace*{0.1cm}

\vspace*{0.3cm}
Now let us introduce some basic assumptions and definitions which we will use throughout the paper.

Let $X_1,\ldots, X_n$ be a sequence of random variables with distribution function  $F$. We will assume the random variables to be short range dependent, a detailed definition is given later on. Moreover, let $F_n$ be the empirical distribution function of $X_1,\ldots,X_n$ with
\begin{align*}
F_n(x)=\frac{1}{n}\sum_{i=1}^{n}{1_{\left[X_i\leq x\right]}}, ~-\infty<x<\infty,
\end{align*}
and $h(x_1,\ldots, x_m)$, for given $m\geq 2$, a kernel, that is a measurable, symmetric function. We define the empirical distribution function $H_n$ of $h\left(X_{i_1},\ldots, X_{i_m}\right)$ as
\begin{align*}
H_n(x)=\frac{1}{\binom{n}{m}}\sum_{1\leq i_1<\ldots < i_m\leq n}{1_{\left[h\left(X_{i_1},\ldots , X_{i_m}\right)\leq x\right]}}, ~ -\infty<x<\infty
\end{align*}
and $H_n^{-1}(p)=\inf\lbrace x \vert H_n(x)\geq p\rbrace$ as the related generalized inverse.
Furthermore, let $H_F$ with $H_F(y)=\mathbb{P}_F(h(Y_1,\ldots,Y_m)\leq y)$ be the distribution function of the kernel $h$ for independent copies $Y_1,\ldots,Y_m$ of $X_1$ and $0<h_F<\infty$ the related density (this implies that $H_F$ is continuous).

We define $h_{F;X_{i_2},\ldots,X_{i_k}}$ as the density of $h(Y_{i_1},X_{i_2},\ldots,X_{i_k},Y_{i_{k+1}},\ldots,Y_{i_m})$ for $2\leq k \leq m$ and $i_1<i_2<\ldots<i_m$.
\vspace*{0.15cm}

\begin{definition}
$\newline$
A generalized $L$-statistic with kernel $h$ is given by
\begin{align*}
T(H_n)&=\int_0^1{H_n^{-1}(t)J(t)dt}+\sum_{i=1}^d{a_iH_n^{-1}(p_i)}\\
&=\sum_{i=1}^{n_{(m)}}{\left[\int_{\frac{(i-1)}{n_{(m)}}}^{\frac{i}{n_{(m)}}}{J(t)dt}\right]H_n^{-1}\left(\frac{i}{n_{(m)}}\right)}+\sum_{i=1}^d{a_iH_n^{-1}(p_i)}.
\end{align*}
\end{definition}
The $GL$-statistic $T(H_n)$ is a natural estimator of $T(H_F)$, which is defined analogously.

\vspace*{0.5cm}
\begin{example}\label{Examu}
$\newline$
Let $h:\mathbb{R}^m\rightarrow \mathbb{R}$ be a measurable function.
A $U$-statistic with kernel $h$ is defined as $$U_n=\frac{1}{\binom{n}{m}}\sum_{1\leq i_1<\ldots < i_m\leq n}{h(X_{i_1},\ldots,X_{i_m})}.$$
If the random variables are independent and identically distributed, $U_n$ is an unbiased estimator of $\theta=\mathbb{E}(h(X_1,\ldots,X_m))$. 
A $U$-statistic can be written as a $GL$-statistic by setting $d=0$ and $J=1$.
\end{example}

\begin{example}
$\newline$
A widely known $L$-statistic is the \textit{$\alpha$-trimmed mean} 
\begin{align*}
\bar{X}_{(\alpha)}=\frac{1}{n-2\left[n\alpha \right]}\sum_{i=\left[n\alpha\right]+1}^{n-\left[n\alpha\right]}{X_{(i)}},
\end{align*}
where $X_{(i)}$ is the $i$th value of the order statistic $X_{(1)}\leq X_{(2)}\leq \ldots \leq X_{(n)}$.
To rewrite it as a $GL$-statistic we choose $J(t)=\tfrac{1}{1-2\alpha}$ for $\alpha<t<1-\alpha$ and $J(t)=0$ everywhere else. As kernel we set $h(x)=x$ and let the sum vanish by the choice $d=0$.
\end{example}

\begin{example}
$\newline$
The generalized \textit{Hodges-Lehmann estimator} 
\begin{align*}
\operatorname{median} \left(\frac{1}{2}\left(X_{i_1}+\ldots+X_{i_m}\right),~ 1\leq i_1,\ldots, i_m \leq n\right)
\end{align*}
is neither an $U$-statistic nor a $L$-statistic, but it is possible to formulate it as a $GL$-statistic choosing the kernel $h(x_{i_1},\ldots,x_{i_m})=\tfrac{1}{2}(x_{i_1}+\ldots+x_{i_m})$ and setting $J=0$, $d=1$, and $a_1=1$. We get the median of the kernel by using the representation via the quantile function $H_n^{-1}(\tfrac{1}{2})$. Consequently $p_1=\tfrac{1}{2}$. The generalized Hodges-Lehmann estimator is the $GL$-statistic  
\begin{align*}
T(H_n)=H_n^{-1}\left(\frac{1}{2}\right).
\end{align*}
\end{example}

\vspace*{0.2cm}

In the following we will consider a special form of short range dependence: strong mixing.

\begin{definition}

Let $(X_n)_{n\in\mathbb{N}}$ be a stationary process. The strong mixing coefficients of $(X_n)$ are 
\begin{align*}
\alpha(k)=\sup_{n\in\mathbb{N}}{\sup{\left\{\left\vert \mathbb{P}(A\cap B)-\mathbb{P}(A)\mathbb{P}(B)\right\vert:A\in\mathcal{F}_1^n,B\in\mathcal{F}_{n+k}^\infty\right\}}},
\end{align*}
where $\mathcal{F}_a^b$ is the $\sigma$-field  generated by $X_a,\ldots,X_b$.

$(X_n)_{n\in\mathbb{N}}$ is called strongly mixing (or $\alpha$-mixing), if $\alpha(k)\rightarrow0$ as $k\rightarrow \infty$.
\end{definition}
Strong mixing is the weakest among the different forms of mixing since the $\alpha$-mixing coefficients are always smaller than for example the $\beta$-mixing coefficients (cf. \cite{Brad2007}).

\vspace*{0.2cm}

After stating the main results, among others the Central Limit Theorem for GL-statistics, we also provide some results concerning $U$-statistics and $U$-processes. In a second step we give an application, the generalized median estimator ($GM$-estimator) for the tail parameter of the Pareto distribution
(cf. \cite{Bra2000} and \cite{Bra22000} under independence). 
The Pareto distribution is commonly used for modelling heavy tails and exceedances of a threshold (peak over threshold, POT). Especially in hydrology it has wide application when only extreme floods above a certain threshold should be considered in the analysis. There also occurs the need of a robust estimator, needing a downweighting of the influence of extreme floods in short time series. Simulations verify that the generalized median estimator is almost as efficient as the  maximum likelihood estimator under independence and for autocorrelated data, but more robust. 
Short range dependence is up to now seldom modelled in the estimation of parameters under POT, but when considering for example monthly discharges it is very probable to find such dependencies. Our investigation of the generalized median estimator aims at closing this gap and can be extended to other situations, where a robust estimator for dependent data is needed.

\vspace*{0.1cm}

Results needed for the proofs of the main results are given in Section \ref{sec:6}, the proofs of the results given in Section \ref{sec:2} can be found in Section \ref{sec:7}.
\vspace*{0.2cm}

\section{Main Results}
\label{sec:2}

An important and well known result concerning quantiles is the representation proposed by Bahadur, which uses the representation of the quantile by the empirical distribution function. A key role plays the remaining term, for which \cite{Gho1971} showed the convergence for ordinary quantiles and under independence. In our case we need the convergence of generalized quantiles and strong mixing. The result is stated in the following theorem.

\begin{theorem}\label{bahad}
$\newline$
Let $(X_n)_{n\in\mathbb{N}}$ be a sequence of strong mixing random variables with distribution function $F$, $\mathbb{E}\lvert X_1 \rvert ^{\rho}<\infty$ for a $\rho\geq1$ and mixing coefficients $\alpha(l)=O(l^{-\delta})$ for a $\delta>\frac{2\rho+1}{\rho}$.
 Moreover let $h(x_1,\ldots,x_m)$ be a Lipschitz-continuous kernel with distribution function $H_F$ and related density $0<h_F<\infty$ and for all $2\leq k \leq m$ let $h_{F;X_2,\ldots,X_k}$ be bounded. Then we have for the Bahadur representation with $\hat{\xi}_p=H_n^{-1}(p)$
\begin{align*}
\hat{\xi}_p=\xi_p+\frac{H_F(\xi_p)-H_n(\xi_p)}{h_F(\xi_p)}+o_p(\frac{1}{\sqrt{n}}).
\end{align*}
\end{theorem}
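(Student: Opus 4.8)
The plan is to follow the classical Ghosh-style argument, adapted to the $U$-statistic setting under strong mixing. The starting observation is the pair of elementary inequalities that for any $p$ and any $\varepsilon>0$, the event $\{\hat\xi_p \le \xi_p + \varepsilon\}$ is (up to the jump of $H_n$) contained in $\{H_n(\xi_p+\varepsilon) \ge p\}$, and similarly on the other side; combining these with the monotonicity of $H_n$ yields that $\hat\xi_p$ is pinned between $\xi_p \pm \varepsilon$ whenever $H_n(\xi_p-\varepsilon) < p \le H_n(\xi_p+\varepsilon)$. Feeding in the (weak) consistency $\hat\xi_p \to \xi_p$, which follows from a law of large numbers for the $U$-statistic $H_n(x)$ at fixed $x$ together with continuity and strict monotonicity of $H_F$ near $\xi_p$ (guaranteed by $0<h_F<\infty$), one gets that with probability tending to one $\hat\xi_p$ lies in a shrinking neighbourhood of $\xi_p$, say $|\hat\xi_p-\xi_p|\le n^{-1/2}\log n$ or a similar rate.

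The core of the proof is then an oscillation (stochastic equicontinuity) bound for the centered $U$-process $x \mapsto H_n(x)-H_F(x)$. Writing $G_n(x) = \sqrt n\,(H_n(x)-H_F(x))$, I would show that
\begin{align*}
\sup_{|x-\xi_p|\le c_n} \bigl| G_n(x) - G_n(\xi_p) \bigr| = o_p(1)
\end{align*}
for $c_n \to 0$ at the relevant rate. This is exactly where the hypotheses on the kernel enter: Lipschitz continuity of $h$ lets one control increments $H_F(x)-H_F(\xi_p)$ and the variance of increments of $H_n$ via the smoothness of the conditional densities $h_{F;X_2,\ldots,X_k}$, whose boundedness is assumed precisely so that the $U$-statistic projections behave well; the mixing rate $\alpha(l)=O(l^{-\delta})$ with $\delta>(2\rho+1)/\rho$ together with the moment assumption $\mathbb{E}|X_1|^\rho<\infty$ is what is needed to push through a maximal-inequality/chaining argument for the $U$-process increments under dependence. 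I expect the paper has already proved an invariance principle for $U$-processes (as announced in the abstract) with an associated modulus-of-continuity estimate, in which case this step reduces to invoking that result; otherwise it is the genuine technical heart and I would prove it by Hoeffding-decomposing $H_n(x)-H_n(\xi_p)$ into a linear (degenerate-rank-one) part plus a higher-order degenerate part, bounding the linear part by a maximal inequality for mixing partial sums and the degenerate part by a moment bound that is $o(n^{-1})$ uniformly over the shrinking window.

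Granting the oscillation bound, the endgame is short. On the high-probability event where $|\hat\xi_p-\xi_p|\le c_n$, decompose
\begin{align*}
H_F(\hat\xi_p) - H_F(\xi_p)
= \bigl(H_F(\hat\xi_p) - H_n(\hat\xi_p)\bigr) - \bigl(H_F(\xi_p) - H_n(\xi_p)\bigr) + \bigl(H_n(\hat\xi_p) - H_n(\xi_p)\bigr).
\end{align*}
The first bracket minus the second is $-n^{-1/2}(G_n(\hat\xi_p)-G_n(\xi_p)) = o_p(n^{-1/2})$ by the oscillation bound, and the last term is $O_p(1/\binom nm) = o_p(n^{-1/2})$ because $H_n$ jumps by at most $1/\binom nm$ between the two arguments and $\hat\xi_p$ is (essentially) a value where $H_n$ attains $p$. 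Since $H_F$ is continuously differentiable near $\xi_p$ with $H_F'(\xi_p)=h_F(\xi_p)>0$, a Taylor expansion gives $H_F(\hat\xi_p)-H_F(\xi_p) = h_F(\xi_p)(\hat\xi_p-\xi_p) + o_p(|\hat\xi_p-\xi_p|)$, and combining with $H_n(\xi_p)-H_F(\xi_p)=O_p(n^{-1/2})$ (so that $\hat\xi_p-\xi_p=O_p(n^{-1/2})$ a posteriori) and solving for $\hat\xi_p-\xi_p$ yields
\begin{align*}
\hat\xi_p = \xi_p + \frac{H_F(\xi_p) - H_n(\xi_p)}{h_F(\xi_p)} + o_p\!\left(\frac{1}{\sqrt n}\right),
\end{align*}
as claimed. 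The main obstacle is unquestionably the uniform oscillation control of the multivariate $U$-process under strong mixing over a $\sqrt n$-shrinking neighbourhood; everything else is bookkeeping around monotonicity and a one-term Taylor expansion.
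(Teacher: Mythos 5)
Your overall architecture (consistency of $\hat\xi_p$, oscillation control of the centered $U$-process near $\xi_p$, Taylor expansion of $H_F$) is a legitimate route, but the endgame as written is internally inconsistent. In your decomposition of $H_F(\hat\xi_p)-H_F(\xi_p)$ you declare the third term $H_n(\hat\xi_p)-H_n(\xi_p)$ to be $O_p(\binom{n}{m}^{-1})=o_p(n^{-1/2})$. That bound holds for $H_n(\hat\xi_p)-p$, not for $H_n(\hat\xi_p)-H_n(\xi_p)$: since $H_n(\hat\xi_p)=p+O(\binom{n}{m}^{-1})$ and $p=H_F(\xi_p)$, one has
\begin{align*}
H_n(\hat\xi_p)-H_n(\xi_p)\;=\;H_F(\xi_p)-H_n(\xi_p)+O\left(\binom{n}{m}^{-1}\right),
\end{align*}
and $H_F(\xi_p)-H_n(\xi_p)$ is exactly the $O_p(n^{-1/2})$ \emph{main term} of the Bahadur representation. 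As written, all three pieces on your right-hand side are $o_p(n^{-1/2})$, which after the Taylor step would force $\hat\xi_p-\xi_p=o_p(n^{-1/2})$ --- a false, superefficient conclusion; the subsequent ``combining with $H_n(\xi_p)-H_F(\xi_p)=O_p(n^{-1/2})$ and solving'' cannot resurrect a term you have already discarded. The fix is mechanical (keep $p-H_n(\xi_p)$ rather than bounding it away), but the derivation you wrote does not produce the claimed formula.

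The second issue is that your ``core'' step, $\sup_{|x-\xi_p|\le c_n}|G_n(x)-G_n(\xi_p)|=o_p(1)$, is asserted rather than proved, and it is strictly stronger than what the paper needs. The paper follows Ghosh (1971) literally: it fixes $t$, sets $\xi_{nt}=\xi_p+tn^{-1/2}$ and $Z_n(t)=\sqrt n\,(H_F(\xi_{nt})-H_n(\xi_{nt}))/h_F(\xi_p)$, proves the \emph{pointwise} statement $Z_n(t)-Z_n(0)\stackrel{P}{\to}0$ for each fixed $t$ by a variance computation (Hoeffding decomposition; the linear part via the moment inequality of Doukhan, Klesov and Lang, the degenerate parts via Lemma~\ref{finalcov}), and then invokes Ghosh's Lemma~1, which converts pointwise convergence plus monotonicity of $H_n$ and $H_F$ into the representation --- no uniform-in-$x$ oscillation bound over a shrinking window is ever required. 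Your route would instead need genuine asymptotic equicontinuity of the linear part of the Hoeffding decomposition; Theorem~\ref{invpri} (weak convergence to a continuous Gaussian process) can in principle deliver this, and Lemma~\ref{remterm} handles the degenerate parts uniformly in $t$, but the extraction of a modulus-of-continuity estimate from the invariance principle is a nontrivial step that you neither carry out nor reduce to a precise citation. I would recommend either supplying that equicontinuity argument explicitly or switching to the pointwise-plus-Ghosh's-lemma scheme, which is what the paper does.
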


Now we will state the main theorem of our paper, the asymptotic normality of $GL$-statistics under strong mixing. Under independence this result was proved by \cite{Serf1984}.

\begin{theorem}\label{main}
$\newline$
Let $h(x_1,\ldots,x_m)$ be a Lipschitz-continuous kernel with distribution function
$H_F$ and related density $0<h_F <\infty$ and for all $2\leq k \leq m$ and all $i_1<i_2<\ldots<i_m$ let $h_{F;X_{i_2},\ldots,X_{i_k}}$ be bounded. Moreover let $J$ be a function with $J(t)=0$ for $t\notin\left[\alpha,\beta\right]$ , $0<\alpha<\beta<1$, and in $\left[\alpha,\beta\right]$ let $J$ be bounded and a.e. continuous concerning the Lebesgue-measure and a.e. continuous concerning $H_F^{-1}$.
Additionally, let $X_1,\ldots,X_n$ be a sequence of strong mixing random variables with $\mathbb{E}\lvert X_1 \rvert^{\rho}<\infty$  for a $\rho\geq 1$ and mixing coefficients $\alpha(n)$ with  $\alpha(n)=O(n^{-\delta})$ for a $\delta \geq 8$. 
Then the following statement holds for $GL$-Statistics $T(H_n)$
\begin{align*}
\sqrt{n}\left(T(H_n)-T(H_F)\right)\stackrel{\mathcal{D}}{\longrightarrow}N(0,\sigma^2),
\end{align*}
where 
\begin{align*}
\sigma^2=&m^2\big(\var\left(\mathbb{E}\left(A(Y_1,\ldots,Y_{m})\vert Y_1=X_1\right)\right)\\
&+2\sum_{j=1}^\infty \cov\left( \mathbb{E}\left(A(Y_1,\ldots,Y_{m})\vert Y_1=X_1\right),\mathbb{E}\left(A(Y_1,\ldots,Y_{m})\vert Y_{1}=X_{j+1}\right)\right)\big)
\end{align*}
with independent copies $Y_1,\ldots,Y_m$ of $X_1$
and
\begin{align*}
A(x_1,\ldots,x_m)=&-\int_{-\infty}^{\infty}{\left(1_{\left[h(x_1,\ldots,x_m)\leq y\right]}-H_F(y)\right)J(H_F(y))dy}\\
&+\sum_{i=1}^{d}{a_i\frac{p_i-1_{\left[h(x_1,\ldots,x_m)\leq H_F^{-1}(p_i)\right]}}{h_F(H_F^{-1}(p_i))}}.
\end{align*}
\end{theorem}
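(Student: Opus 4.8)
The plan is to linearise $\sqrt n\,(T(H_n)-T(H_F))$ into a single centred $U$-statistic with kernel $A$ and then to invoke the central limit theorem for $U$-statistics of strongly mixing data provided in Section~\ref{sec:2}. Starting from
\begin{align*}
T(H_n)-T(H_F)=\int_0^1\bigl(H_n^{-1}(t)-H_F^{-1}(t)\bigr)J(t)\,dt+\sum_{i=1}^d a_i\bigl(H_n^{-1}(p_i)-H_F^{-1}(p_i)\bigr),
\end{align*}
I would substitute into both parts the Bahadur representation
\begin{align*}
H_n^{-1}(t)-H_F^{-1}(t)=-\,\frac{H_n\bigl(H_F^{-1}(t)\bigr)-t}{h_F\bigl(H_F^{-1}(t)\bigr)}+R_n(t),
\end{align*}
which holds with $R_n(t)=o_p(n^{-1/2})$ for fixed $t$ by Theorem~\ref{bahad} (using the boundedness of the densities $h_{F;X_{i_2},\ldots,X_{i_k}}$). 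For the finite sum over $p_1,\ldots,p_d$ this already suffices; for the integral the point is to show $\sqrt n\int_\alpha^\beta R_n(t)J(t)\,dt=o_p(1)$. Since $J$ is bounded and vanishes outside $[\alpha,\beta]\subset(0,1)$, the relevant values $y=H_F^{-1}(t)$ lie in the compact interval $[H_F^{-1}(\alpha),H_F^{-1}(\beta)]$, on which $0<h_F<\infty$; the a.e.\ continuity of $J$ (with respect to the Lebesgue measure and to $H_F^{-1}$) together with a uniform-on-compacts control of the oscillations of the $U$-empirical process $y\mapsto H_n(y)$ --- available from the $U$-process invariance principle and the Bahadur-remainder rate established in Section~\ref{sec:6} --- then bounds the integrated remainder by $o_p(n^{-1/2})$. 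The substitution $y=H_F^{-1}(t)$, $dt=h_F(y)\,dy$, turns the leading part of the integral into $-\int_{-\infty}^\infty\bigl(H_n(y)-H_F(y)\bigr)J(H_F(y))\,dy$, so that
\begin{align*}
\sqrt n\bigl(T(H_n)-T(H_F)\bigr)&=-\sqrt n\int_{-\infty}^\infty\bigl(H_n(y)-H_F(y)\bigr)J(H_F(y))\,dy\\
&\quad-\sqrt n\sum_{i=1}^d a_i\frac{H_n\bigl(H_F^{-1}(p_i)\bigr)-p_i}{h_F\bigl(H_F^{-1}(p_i)\bigr)}+o_p(1).
\end{align*}

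Next I would identify the first two terms on the right as a single $U$-statistic. Since $H_n(y)=\binom{n}{m}^{-1}\sum_{i_1<\ldots<i_m}1_{[h(X_{i_1},\ldots,X_{i_m})\le y]}$ and $H_F(y)=\mathbb{P}_F(h(Y_1,\ldots,Y_m)\le y)$, interchanging $\binom{n}{m}^{-1}\sum_{i_1<\ldots<i_m}$ with the $y$-integral (respectively with the finite sum over $i$) --- legitimate because the integrand is bounded on the compact support of $J(H_F(\cdot))$ --- shows that this expression equals $\binom{n}{m}^{-1}\sum_{i_1<\ldots<i_m}A(X_{i_1},\ldots,X_{i_m})$ with exactly the kernel $A$ of the statement, and $\mathbb{E}\bigl(A(Y_1,\ldots,Y_m)\bigr)=0$ because $H_F$ is continuous (so $H_F(H_F^{-1}(p_i))=p_i$). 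Hence $\sqrt n\,(T(H_n)-T(H_F))=\sqrt n\,U_n^A+o_p(1)$, where $U_n^A$ is the centred $U$-statistic with symmetric kernel $A$.

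It then remains to apply to $U_n^A$ the central limit theorem for $U$-statistics of strongly mixing sequences. The kernel $A$ is bounded; its first summand depends on $(x_1,\ldots,x_m)$ only through $h(x_1,\ldots,x_m)$, is Lipschitz in that argument (with derivative $-J(H_F(\cdot))$, which is bounded), and hence Lipschitz in $(x_1,\ldots,x_m)$ since $h$ is, while its second summand is a finite linear combination of indicators $1_{[h(x_1,\ldots,x_m)\le H_F^{-1}(p_i)]}$. Thus $A$ falls within the scope of the $U$-statistics limit theorem of Sections~\ref{sec:2} and~\ref{sec:6} under $\alpha(n)=O(n^{-\delta})$, $\delta\ge 8$, and $\mathbb{E}|X_1|^\rho<\infty$; alternatively one splits $U_n^A$ into the $U$-statistic with the Lipschitz part of $A$ and the $U$-empirical terms $H_n(H_F^{-1}(p_i))$ and treats them separately. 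The Hoeffding decomposition reduces $\sqrt n\,U_n^A$ to its linear part $\frac{m}{\sqrt n}\sum_{i=1}^n A_1(X_i)+o_p(1)$ with $A_1(x)=\mathbb{E}(A(Y_1,\ldots,Y_m)\mid Y_1=x)$, and the resulting limit is $N(0,\sigma^2)$ with
\begin{align*}
\sigma^2=m^2\Bigl(\var\bigl(A_1(X_1)\bigr)+2\sum_{j=1}^\infty\cov\bigl(A_1(X_1),A_1(X_{j+1})\bigr)\Bigr),
\end{align*}
the series converging absolutely because $|\cov(A_1(X_1),A_1(X_{j+1}))|\le C\,\alpha(j)$ and $\sum_j\alpha(j)<\infty$. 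This is exactly the claimed variance, so the proof is complete.

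I expect the main obstacle to be the passage in Step~1 from the pointwise Bahadur representation of Theorem~\ref{bahad} to a version whose remainder is still $o_p(n^{-1/2})$ after integration against $J$: this needs a uniform-on-compacts bound on the oscillations of the $U$-empirical process $H_n$, which is where the $U$-process invariance principle and the Bahadur-remainder rate of Section~\ref{sec:6} enter and which is the reason for the comparatively strong mixing condition $\delta\ge 8$. A secondary technical point is that $A$ is not Lipschitz, because of the quantile part $\sum_i a_i H_n^{-1}(p_i)$, so the $U$-statistics central limit theorem has to be available for kernels of bounded variation / indicator type rather than only for Lipschitz kernels, and the near-degenerate components of the Hoeffding decomposition must be shown to be negligible after the $\sqrt n$ scaling --- both delivered by the $U$-statistics results the paper establishes.
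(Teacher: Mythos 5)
Your overall architecture coincides with the paper's: the paper proves the theorem by verifying Serfling's (1984) three sufficient conditions, which are exactly your three ingredients --- (i) linearization of the integral term together with $\lVert H_n-H_F\rVert_\infty=O_p(n^{-1/2})$ (Corollary \ref{glican}), (ii) the Bahadur remainder $o_p(n^{-1/2})$ for the finitely many quantiles $p_1,\dots,p_d$ (Theorem \ref{bahad}), and (iii) the central limit theorem for the $U$-statistic with kernel $A$ (Theorem \ref{asynom}), for which one checks that $A$ is bounded and satisfies the \emph{extended variation} condition. Your worry that $A$ is not Lipschitz is resolved exactly as you guess: Theorem \ref{asynom} only requires the variation condition, and the indicator part of $A$ satisfies it because $h$ is Lipschitz and $h_F$ is bounded, which is the computation the paper carries out.

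The one genuine divergence, and the one gap, is your treatment of $\int_0^1\bigl(H_n^{-1}(t)-H_F^{-1}(t)\bigr)J(t)\,dt$. You integrate the pointwise Bahadur representation over $t\in[\alpha,\beta]$, which requires the remainder to remain $o_p(n^{-1/2})$ \emph{after integration}; Theorem \ref{bahad} is a fixed-$p$ statement, and no uniform or integrated Bahadur-remainder rate is established anywhere in the paper --- Section \ref{sec:6} contains the Hoeffding-decomposition lemmas, not the uniform control you invoke. The paper sidesteps the issue entirely with Serfling's deterministic identity (Lemma 8.2.4.A of \cite{serf1980}): the difference of the two integrals equals
\begin{align*}
-\int\bigl(H_n(y)-H_F(y)\bigr)J(H_F(y))\,dy-\int\bigl(H_n(y)-H_F(y)\bigr)W_{H_n,H_F}(y)\,dy,
\end{align*}
and the error term is bounded by $\lVert H_n-H_F\rVert_\infty\cdot\lVert W_{H_n,H_F}\rVert_{L_1}=O_p(n^{-1/2})\cdot o_p(1)$, so no quantile inversion is needed for the continuum of $t$'s at all; the Bahadur representation is reserved for the $d$ fixed quantiles, where a pointwise statement suffices. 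Your route can in principle be repaired --- Corollary \ref{glican} together with $h_F$ bounded away from zero on $[H_F^{-1}(\alpha),H_F^{-1}(\beta)]$ yields $\sup_{t\in[\alpha,\beta]}\sqrt n\,\lvert R_n(t)\rvert=O_p(1)$, and a truncation plus dominated-convergence argument then upgrades the pointwise $o_p(n^{-1/2})$ to an integrated one --- but as written this step is asserted rather than proved, and the tool you cite for it does not exist in the paper.
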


For the proof of this theorem, which is given in Section \ref{sec:7}, a key tool will be the representation of the kernel $A$ as a $U$-statistic, see Example \ref{Examu}. Additionally also the functional $H_n$ belongs to the class of $U$-statistics and therefore we make use of several results of the theory of $U$-statistics. In the following section we will extend some known results for bivariate $U$-statistics under strong mixing to the multivariate case. We will see that this extension causes some problems concerning the dependencies in the kernels and the solution of these problems is not straightforward.

\begin{remark}
In the case of bivariate kernels, similar results as Theorems \ref{asynom} and \ref{invpri} can be found in  \cite{Boro}, \cite{Deh2002} and \cite{Wen2011} for NED-sequences of absolutely regular processes. We conjecture that an extension to the multivariate case is possible also under this other type of weak dependence, but detailed proofs are beyond the scope of this paper.
\end{remark}

\subsection{$U$-statistics and $U$-processes}
\label{sec:3}

While examining $U$-statistics often a technique called Hoeffding decomposition (\cite{Hoeff1948}) is used. It decomposes the $U$-statistic into a sum of different terms, which we can examine separately.

\begin{definition}\label{hoeff}(Hoeffding decomposition)
$\newline$
Let $U_n$ be a $U$-statistic with kernel $h=h(x_1,\ldots,x_m)$. Then one can write $U_n$ as
\begin{align*}
U_n=\theta+\sum_{j=1}^{m}{\binom{m}{j}\frac{1}{\binom{n}{j}}S_{jn}},
\end{align*}
where 
\begin{align*}
\theta&=\mathbb{E}(h(Y_1,\ldots,Y_m))\\
\tilde{h}_j(x_1,\ldots,x_j)&=\mathbb{E}(h(x_1,\ldots,x_j,Y_{j+1},\ldots,Y_m))-\theta\\
S_{jn}&=\sum_{1\leq i_1<\ldots<i_j\leq n}g_j(X_{i_1},\ldots,X_{i_j})\\
g_1(x_1)&=\tilde{h}_1(x_1)\\
g_2(x_1,x_2)&=\tilde{h}_2(x_1,x_2)-g_1(x_1)-g_1(x_2)\\
g_3(x_1,x_2,x_3)&=\tilde{h}_3(x_1,x_2,x_3)-\sum_{i=1}^{3}g_1(x_i)-\sum_{1\leq i<j\leq 3}g_2(x_i,x_j)\\
&\ldots\\
g_m(x_1,\ldots,x_m)&=\tilde{h}_m(x_1,\ldots,x_m)-\sum_{i=1}^{m}g_1(x_i)-\sum_{1\leq i_1<i_2\leq m}g_2(x_{i_1},x_{i_2})\\
&-\ldots - \sum_{1\leq i_1<\ldots<i_{m-1}\leq m}{g_{m-1}(x_{i_1},\ldots,x_{i_{m-1}})}.
\end{align*}
for independent copies $Y_1,\ldots,Y_m$ of $X_1$.
\end{definition}

The term $\frac{m}{n}\sum_{i=1}^ng_1(X_i)$ is called the linear part, the remaining parts are called degenerated.

\vspace*{0.4cm}

For most of the results in this section we need a regularity condition for the kernel $h$, which was first developed by \cite{Denk1986} and is extended for our purpose.

\begin{definition}
A kernel $h$ satisfies the variation condition, if there exists a constant $L$ and an $\epsilon_0>0$, such that for all $\epsilon\in (0,\epsilon_0)$
\begin{align*}
\mathbb{E}\left(\sup\limits_{\lVert(x_1,\ldots,x_m)-(X_1',\ldots,X_m')\rVert\leq \epsilon}\left| h(x_1,\ldots,x_m)-h(X_1',\ldots,X_m')\right|\right)\leq L\epsilon,
\end{align*}
where the $X_i'$ are independent with the same distribution as $X_i$ and $\lVert \cdot \rVert$ is the Euklidean norm. 

A kernel $h$ satisfies the extended variation condition, if there additionally exist constants $L'>0$ and $\delta_0>0$, such that for all $\delta\in(0,\delta_0)$ and all $2\leq k \leq m$
\begin{align*}
&\mathbb{E}\Biggl(\sup\limits_{\lvert x_{i_1}-Y_{i_1}\rvert\leq \delta}\left| h(x_{i_1},X_{i_2},\ldots,X_{i_k},Y_{i_{k+1}},\ldots,Y_{i_m})\right.\\
&\phantom{\mathbb{E}\left(\sup\limits_{\lvert x_{i_1}-Y_{i_1}\rvert\leq\delta}\right. h(x_{i_1},X_{i_2},\ldots,)}\left.-h(Y_{i_1},X_{i_2},\ldots,X_{i_k},Y_{i_{k+1}},\ldots,Y_{i_m})\right|\Biggr)\\
&\leq L'\delta
\end{align*} 
for independent copies $(Y_n)_{n\in\mathbb{N}}$ of $(X_n)_{n\in\mathbb{N}}$ and all $i_1<i_2<\ldots <i_m$. 
If the kernel has dimension $m=1$, we note that it satisfies the extended variation condition, if it satisfies the variation condition.
\end{definition}

\begin{remark}
Every Lipschitz-continuous kernel satisfies the variation condition.
\end{remark}

Now we state another main result of this paper, the aymptotic normality of $U$-statistics under strong mixing. For bivariate $U$-statistics this result is already known (see \cite{Wen2011}), but not for arbitrary dimension $m$ of the kernel $h$.

\begin{theorem}\label{asynom}
$\newline$
Let $h:\mathbb{R}^m\rightarrow \mathbb{R}$ be a bounded kernel satisfying the extended variation condition. 
Moreover let $(X_n)_{n\in\mathbb{N}}$ be a sequence of strong mixing random variables with $\mathbb{E}\lvert X_1\rvert^{\rho}<\infty$ for a $\rho>0$ and mixing coefficients $\alpha(l)=O(l^{-\delta})$ for a $\delta>\tfrac{2\rho+1}{\rho}$. 
Then we have
\begin{align*}
\sqrt{n}(U_n-\theta)\stackrel{D}{\longrightarrow}N(0,m^2\sigma^2)
\end{align*}
with $\sigma^2=\var(g_1(X_1))+2\sum_{j=1}^{\infty}{\cov(g_1(X_1),g_1(X_{1+j}))}$.

If $\sigma=0$ then the statement means convergence to $0$ in probability.
\end{theorem}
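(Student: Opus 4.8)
The plan is to run the Hoeffding decomposition of Definition~\ref{hoeff},
\[
\sqrt n\,(U_n-\theta)=\frac{m}{\sqrt n}\sum_{i=1}^{n}g_1(X_i)+\sqrt n\sum_{j=2}^{m}\binom{m}{j}\binom{n}{j}^{-1}S_{jn},
\]
and to show that the linear part obeys the asserted central limit theorem while each degenerate part is asymptotically negligible. Since $\binom{n}{j}^{-1}\asymp n^{-j}$ and $\mathbb{E}S_{jn}=0$, the negligibility of the $j$-th degenerate part reduces, by Chebyshev's inequality, to the variance bound $\var(S_{jn})=o(n^{2j-1})$, which must be established for every $j\in\{2,\dots,m\}$.

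The linear part is the straightforward half. Since $h$ is bounded, $g_1=\tilde h_1$ is a bounded and centred measurable function of $X_1$, so $(g_1(X_i))_{i\in\mathbb{N}}$ is again a stationary strongly mixing sequence, with bounded marginals and the same mixing coefficients. Because $\delta>\tfrac{2\rho+1}{\rho}>2$ we have $\sum_{k\ge1}\alpha(k)<\infty$, so the classical central limit theorem for strongly mixing sequences (see e.g.\ \cite{Brad2007}) gives
\[
\frac{1}{\sqrt n}\sum_{i=1}^n g_1(X_i)\ \stackrel{D}{\longrightarrow}\ N(0,\sigma^2),\qquad \sigma^2=\var(g_1(X_1))+2\sum_{j\ge1}\cov\big(g_1(X_1),g_1(X_{1+j})\big),
\]
the series converging absolutely by the same summability. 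If $\sigma=0$, then $\var\big(\tfrac{1}{\sqrt n}\sum_{i=1}^{n}g_1(X_i)\big)\to0$, so the linear part tends to $0$ in $L^2$ and hence in probability; combined with the degenerate estimate this yields the degenerate conclusion.

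The heart of the argument is the bound $\var(S_{jn})=o(n^{2j-1})$ for $j\ge2$, i.e.\ control of
\[
\var(S_{jn})=\sum_{i_1<\dots<i_j}\ \sum_{l_1<\dots<l_j}\cov\!\big(g_j(X_{i_1},\dots,X_{i_j}),\,g_j(X_{l_1},\dots,X_{l_j})\big).
\]
Two ingredients drive it. First, $g_j$ is completely degenerate, $\mathbb{E}\big(g_j(x_1,\dots,x_{j-1},X_1)\big)=0$ for all fixed $x_1,\dots,x_{j-1}$ and, by symmetry, in every coordinate; this is the standard property of the Hoeffding projections and follows from the inclusion--exclusion defining $g_j$. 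Second, the \emph{extended} variation condition on $h$ is exactly what is needed to pass a variation-type bound to $\tilde h_l(x_1,\dots,x_l)=\mathbb{E}\big(h(x_1,\dots,x_l,Y_{l+1},\dots,Y_m)\big)-\theta$, hence to each $g_j$, in the situation where one argument is perturbed while the other arguments are filled with the underlying (dependent) observations of the process --- precisely the configuration that arises when $g_j$ is evaluated along the time series. Together with a coupling step (Bradley's coupling lemma, using $\mathbb{E}|X_1|^{\rho}<\infty$) these yield a covariance inequality: for two index tuples one decouples the observations across large gaps in the sorted union of the indices, replacing consecutive blocks by independent copies; the coupling error across a gap of length $D$ is of order $\alpha(D)^{\kappa}$ for some $\kappa=\kappa(\rho)>0$ (after optimising the coupling radius against the variation bound and $\|h\|_\infty$), and by complete degeneracy the decoupled covariance vanishes as soon as some resulting block carries indices of only one of the two tuples, while otherwise one recurses on the blocks. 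Summing these bounds, one separates the ``entangled'' pairs of tuples, for which (relative to a threshold $\theta_n$) every block still carries indices of both tuples --- a set of size $O(n^{j}\theta_n^{j})$, each term $O(1)$ by boundedness --- from the remaining pairs, which are grouped by the dyadic size $D$ of a large gap, $O(nD^{2j-1})$ in number with $O(\alpha(D)^{\kappa})$ per term; a suitable polynomial choice of $\theta_n$ renders both sums $o(n^{2j-1})$ under the mixing rate $\delta>\tfrac{2\rho+1}{\rho}$ (which makes $\delta\kappa>1$), and the proof is complete.

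\textbf{Main obstacle.} The decisive difficulty is the covariance inequality and its summation for $m\ge3$. When the two $j$-tuples are \emph{interleaved}, a single mixing split does not decouple the two copies of $g_j$, so one must recurse, at each stage choosing which coordinates to decouple in order for complete degeneracy to be applicable --- this is exactly where the extended rather than the ordinary variation condition is indispensable. Equally delicate is the combinatorics: the number of interleaving patterns of two $j$-tuples grows with $m$, and the configuration count has to be organised sharply enough that the geometric decay of the coupling errors compensates it under the stated, rather than a substantially stronger, mixing assumption. In the bivariate case this bookkeeping is already non-trivial (cf.\ \cite{Wen2011}); supplying a pattern-uniform covariance bound together with a clean recursive block decomposition of the union of indices is the crux of the multivariate extension.
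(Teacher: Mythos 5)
Your proposal follows essentially the same route as the paper: Hoeffding decomposition, an Ibragimov-type CLT for the bounded, centred, strongly mixing linear part, and negligibility of the degenerate parts via a Yoshihara-type covariance inequality obtained by coupling together with complete degeneracy and the extended variation condition, then summed with a gap-based combinatorial count. The paper packages precisely these two steps as Lemmas \ref{covineq} and \ref{finalcov}, keying the decay and the tuple count on $\max\{i_{(2)}-i_{(1)},\,i_{(2k)}-i_{(2k-1)}\}$; your threshold-plus-dyadic-gap bookkeeping is an equivalent variant of the same estimate.
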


The key tool for the proof of this theorem is the Hoeffding decomposition, for which the first term converges against the given distribution while all remaining terms converge towards zero.

\vspace*{0.5cm}

As an extension to $U$-statistics we also analyse $U$-processes and their convergence. In other words our $U$-statistic has no longer a fixed kernel $h$ but we have a process $(U_n(t))_{t\in\mathbb{R}}$. Up to now we have had $(H_n(t))_{t\in\mathbb{R}}$ as an example of such a process.

\vspace*{0.1cm}

\begin{definition}
Let $h:\mathbb{R}^{m+1}\rightarrow\mathbb{R}$ be a measurable and bounded function, symmetric in the first $m$ arguments and non-decreasing in the last. Suppose that for all $x_1,\ldots,x_m \in \mathbb{R}$ we have $\lim\limits_{t\rightarrow \infty}h(x_1,\ldots,x_m,t)=1,~\lim\limits_{t\rightarrow -\infty}h(x_1,\ldots,x_m,t)=0.$
We call the process $(U_n(t))_{t\in \mathbb{R}}$ empirical $U$-distribution function. As $U$-distribution function we define $U(t):=\mathbb{E}\left(h(Y_1,\ldots,Y_m,t)\right)$ for independent copies $Y_1,\ldots,Y_m$ of $X_1$. Then the empirical process is defined as 

$$(\sqrt{n}(U_n(t)-U(t)))_{t \in \mathbb{R}}.$$
\end{definition}

\vspace*{0.2cm}

Analogous to simple $U$-statistics here the Hoeffding decomposition is an important technique in our proofs. 
For fixed $t$ we have
\begin{align*}
U_n(t)=\frac{1}{\binom{n}{m}}\sum_{1\leq i_1<\ldots<i_m\leq n}h(X_{i_1},\ldots,X_{i_m},t)
\end{align*}
 and therefore we can decompose $U_n(t)$ analogously to Definition \ref{hoeff}.

Likewise we will need a new form of the extended variation condition. 
\begin{definition}
We say $h$ satisfies the extended uniform variation condition, if the extended variation condition holds for $h(x_1,\ldots,x_m,t)$ with a constant not depending on $t$.
\end{definition}

\vspace*{0.3cm}

A typical result for processes is the Invariance Principle, a result we also need for our $U$-processes. For near epoch dependent sequences on absolutely regular processses it was already proved by \cite{Deh2002}. A result for strong mixing can be found in \cite{Wen2011}. Under independence one can find a strong invariance principle in \cite{De1987}. Nevertheless these results only consider the bivariate case, whereas we also admit multivariate kernels. For our purposes we only need the convergence of the first term of the Hoeffding decomposition, so the proof will be somewhat different. 

\vspace*{0.1cm}
From now on consider the case where $H_n$ is our empirical $U$-process, that is $U_n(t)$ has the kernel $g(x_1,\ldots,x_m,t)=1_{[h(x_1,\ldots,x_m)\leq t]}$. Therefore $U(t)=\mathbb{E}\left(1_{[h(Y_1,\ldots,Y_m)\leq t]}\right)=\mathbb{P}(h(Y_1,\ldots,Y_m)\leq t)=H_F(t)$ and since $H_F$ has density $h_F<\infty$ we have that $H_F$ is Lipschitz-continuous.

\begin{theorem}\label{invpri}
$\newline$
Let $h$ be a kernel with distribution function $H_F$ and related density $h_F<\infty$.
Moreover, let $g_1$ be the first term of the Hoeffding decomposition of $H_n$.
Let $(X_n)_{n\in\mathbb{N}}$ be a sequence of strong mixing random variables with mixing coefficients $\alpha(l)=O(l^{-6-\gamma})$ for a $0<\gamma <1$.
Then
\begin{align*}
\left(\frac{m}{\sqrt{n}}\sum_{i=1}^n g_1(X_i,t)\right)_{t\in\mathbb{R}}\stackrel{D}{\longrightarrow}\left(W(t)\right)_{t\in \mathbb{R}},
\end{align*}
where $W$ is a continuous Gaussian process.
\end{theorem}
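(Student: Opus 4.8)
The plan is to establish the invariance principle by the standard two-step route: first prove convergence of the finite-dimensional distributions of the process $\left(\frac{m}{\sqrt{n}}\sum_{i=1}^n g_1(X_i,t)\right)_{t\in\mathbb{R}}$, then prove tightness (asymptotic equicontinuity) in the space $D(\mathbb{R})$ equipped with the sup-norm (after a standard compactification $\overline{\mathbb{R}} = [-\infty,\infty]$, using that $H_F$ is a proper continuous distribution function so the process is $0$ at $-\infty$ and $0$ at $+\infty$ for the centered version). For the finite-dimensional distributions, fix $t_1<\dots<t_r$ and apply the Cramér--Wold device: any linear combination $\sum_{\ell=1}^r c_\ell \frac{m}{\sqrt n}\sum_{i=1}^n g_1(X_i,t_\ell) = \frac{m}{\sqrt n}\sum_{i=1}^n \big(\sum_\ell c_\ell g_1(X_i,t_\ell)\big)$ is itself $\sqrt n$ times the linear part of a $U$-statistic with kernel $\sum_\ell c_\ell g(x_1,\dots,x_m,t_\ell)$. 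Here $g_1(x,t)$ is a bounded function of a single $X_i$, hence a triangular-array sum of a bounded functional of a strong mixing sequence, so asymptotic normality follows from the classical CLT for strong mixing sequences (e.g.\ the conditions in Theorem \ref{asynom} applied to the $m=1$ linear part, which need only $\alpha(l)=O(l^{-\delta})$, $\delta>3$, comfortably implied by $\alpha(l)=O(l^{-6-\gamma})$). The limiting covariance structure is $\mathrm{Cov}(W(s),W(t)) = m^2\big(\cov(g_1(X_1,s),g_1(X_1,t)) + \sum_{j\geq1}\cov(g_1(X_1,s),g_1(X_{1+j},t)) + \sum_{j\geq1}\cov(g_1(X_1,t),g_1(X_{1+j},s))\big)$, which is well-defined and finite under the mixing rate since $g_1$ is bounded and the covariances decay like $\alpha(j)$.

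The heart of the argument, and the main obstacle, is tightness: I would show that for every $\eta>0$,
\begin{align*}
\lim_{\nu\to 0}\limsup_{n\to\infty}\mathbb{P}\Big(\sup_{|H_F(s)-H_F(t)|\leq \nu}\Big|\frac{m}{\sqrt n}\sum_{i=1}^n\big(g_1(X_i,s)-g_1(X_i,t)\big)\Big|>\eta\Big)=0.
\end{align*}
The natural tool is a chaining argument combined with a moment bound. The key estimate is a fourth-moment (or $(2+\epsilon)$-moment) inequality for $\sum_{i=1}^n (g_1(X_i,s)-g_1(X_i,t))$, a partial sum of bounded, centered, strong mixing random variables, of the form $\mathbb{E}\big|\sum_{i=1}^n(g_1(X_i,s)-g_1(X_i,t))\big|^4 \leq C\, n^2\, \big(\mathbb{E}(g_1(X_1,s)-g_1(X_1,t))^2\big)^{?} + \text{lower order}$; such inequalities are available for strong mixing sequences under polynomial mixing rates (Yokoyama-type or Doukhan--Louhichi moment bounds), and the rate $\alpha(l)=O(l^{-6-\gamma})$ is precisely what is needed to push a fourth-moment Rosenthal-type inequality through. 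One must control $\mathbb{E}(g_1(X_1,s)-g_1(X_1,t))^2$ by $|H_F(s)-H_F(t)|$ up to a constant: since $g_1(x,t)=\tilde h_1(x,t) = \mathbb{E}(1_{[h(x,Y_2,\dots,Y_m)\leq t]}) - H_F(t)$, the increment $g_1(x,s)-g_1(x,t)$ is bounded in absolute value by $\mathbb{P}(t<h(x,Y_2,\dots,Y_m)\leq s)+|H_F(s)-H_F(t)|$, and taking expectation over $X_1$ of the square, together with the boundedness of the partial densities $h_{F;X_2,\dots,X_k}$ (available from the hypotheses of the surrounding theorems, though here one uses only $h_F<\infty$ for the outer control), yields the required modulus bound. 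Standard entropy/chaining with this variance bound and the fourth-moment inequality then gives asymptotic equicontinuity.

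Finally, I would assemble the pieces: finite-dimensional convergence plus tightness gives weak convergence in $D(\overline{\mathbb{R}})$ to a Gaussian process $W$ with the covariance displayed above; continuity of the limit sample paths follows from the equicontinuity modulus estimate (the same bound shows $\mathbb{E}(W(s)-W(t))^2 \leq C|H_F(s)-H_F(t)|$, and since $H_F$ is continuous, Kolmogorov's continuity criterion—or the fact that a weak limit of processes satisfying a uniform equicontinuity condition has continuous paths—yields a continuous version). The main technical work, and where care is genuinely needed, is verifying that the multivariate kernel structure does not spoil the single-variable moment and variance bounds for $g_1$; but since $g_1$ is a projection onto one coordinate, it is in fact a bounded function of a single observation, so the multivariate difficulties flagged elsewhere in the paper do not bite here—this is exactly why the authors remark that ``we only need the convergence of the first term of the Hoeffding decomposition, so the proof will be somewhat different'' and lighter than a full $U$-process invariance principle.
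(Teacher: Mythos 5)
Your outline is essentially the proof the paper intends: the paper itself omits the argument, stating only that the theorem ``can be proved in the same way as Theorem 4.1 of Dehling and Philipp (2002)'', and that proof is exactly your two-step route (finite-dimensional convergence via Cram\'er--Wold and the CLT for bounded strongly mixing sequences, then tightness by chaining with a fourth-moment bound of Yokoyama/M\'oricz type and the increment control $\mathbb{E}(g_1(X_1,s)-g_1(X_1,t))^2\leq C\lvert H_F(s)-H_F(t)\rvert$). Your observation that $g_1$ is a bounded function of a single observation, so the multivariate kernel structure causes no extra difficulty here, is also the reason the paper can defer to the existing bivariate/empirical-process result.
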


This theorem can be proved in the same way as Theorem 4.1 of \cite{Deh2002} and is therefore omitted.

\vspace*{0.15cm}

By using results concerning the convergence of all remaining terms of the Hoeffding decomposition, which is given in Lemma \ref{remterm}, we can state the following corollary.

\begin{corollary}\label{glican}
$\newline$
Let $(X_n)_{n\in\mathbb{N}}$ be a sequence of strong mixing random variables with mixing coefficients $\alpha(l)=O(l^{-\delta})$ for $\delta\geq 8$ and $\mathbb{E}\lvert X_1\rvert^{\rho}<\infty$ for a $\rho>\frac{1}{4}$.
Moreover let $h$ be a Lipschitz-continuous kernel with distribution function $H_F$ and related density $h_F<\infty$ and for all $2\leq k \leq m$ let $h_{F;X_2,\ldots,X_k}$ be bounded.
Then
\begin{align*}
\sup\limits_{t\in\mathbb{R}}\left\vert\sqrt{n}\left(H_n(t)-H_F(t)\right)\right\vert=O_p(1).
\end{align*}
\end{corollary}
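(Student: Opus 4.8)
The plan is to combine the Hoeffding decomposition of the empirical $U$-process $H_n$ with the invariance principle of Theorem~\ref{invpri} for the linear part and the uniform control of the degenerate parts coming from Lemma~\ref{remterm}. Concretely, write $H_n(t) = H_F(t) + \frac{m}{n}\sum_{i=1}^n g_1(X_i,t) + R_n(t)$, where $g_1(x_1,t)=\mathbb{E}\bigl(1_{[h(x_1,Y_2,\ldots,Y_m)\leq t]}\bigr)-H_F(t)$ is the first term of the Hoeffding decomposition and $R_n(t)$ collects all higher-order (degenerate) terms. Then
\begin{align*}
\sup_{t\in\mathbb{R}}\bigl|\sqrt{n}(H_n(t)-H_F(t))\bigr| \leq \sup_{t\in\mathbb{R}}\Bigl|\tfrac{m}{\sqrt{n}}\textstyle\sum_{i=1}^n g_1(X_i,t)\Bigr| + \sqrt{n}\sup_{t\in\mathbb{R}}|R_n(t)|,
\end{align*}
so it suffices to show each summand on the right is $O_p(1)$.

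For the first summand, Theorem~\ref{invpri} gives weak convergence in the Skorokhod (or $\ell^\infty$) sense of $\bigl(\frac{m}{\sqrt{n}}\sum_{i=1}^n g_1(X_i,t)\bigr)_{t\in\mathbb{R}}$ to a continuous Gaussian process $W$. Since the supremum functional $x\mapsto \sup_t|x(t)|$ is continuous with respect to the relevant topology, the continuous mapping theorem yields $\sup_{t\in\mathbb{R}}\bigl|\frac{m}{\sqrt{n}}\sum_{i=1}^n g_1(X_i,t)\bigr|\stackrel{D}{\longrightarrow}\sup_{t\in\mathbb{R}}|W(t)|$, which is an almost surely finite random variable; hence this term is tight, i.e.\ $O_p(1)$. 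One should check that the hypotheses of Theorem~\ref{invpri} are met: Lipschitz continuity of $h$ with bounded density $h_F$ is assumed, and the mixing rate $\alpha(l)=O(l^{-\delta})$ with $\delta\geq 8$ is stronger than the required $O(l^{-6-\gamma})$ for some $0<\gamma<1$.

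For the second summand, the degenerate part $R_n(t) = \sum_{j=2}^{m}\binom{m}{j}\frac{1}{\binom{n}{j}}S_{jn}(t)$ must be shown to satisfy $\sqrt{n}\sup_{t\in\mathbb{R}}|R_n(t)| = o_p(1)$, which is precisely the content I expect Lemma~\ref{remterm} (the "remaining terms" lemma referenced just before the corollary) to provide — a uniform-in-$t$ bound on the degenerate Hoeffding terms of order faster than $n^{-1/2}$. The moment condition $\mathbb{E}|X_1|^\rho<\infty$ for $\rho>\tfrac14$, the boundedness of the conditional densities $h_{F;X_2,\ldots,X_k}$, the Lipschitz property of $h$, and the mixing rate $\delta\geq 8$ are exactly the assumptions feeding into that lemma. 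Invoking it directly gives $\sqrt{n}\sup_t|R_n(t)|\to 0$ in probability, hence $O_p(1)$, and adding the two bounds completes the proof.

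The main obstacle is not in the corollary itself — which is essentially a bookkeeping combination of two previously established results via the triangle inequality and the continuous mapping theorem — but rather lies hidden in the two ingredients it relies on. The genuinely hard work is (i) the invariance principle of Theorem~\ref{invpri}, in particular establishing tightness of the $U$-process linear part under only polynomial strong mixing, and (ii) the uniform control of the degenerate Hoeffding terms in Lemma~\ref{remterm}, where the multivariate kernel structure creates the dependency complications the introduction warns about. Given those, the proof of Corollary~\ref{glican} is short; the only point requiring mild care is verifying that the supremum functional is continuous for the mode of convergence in which Theorem~\ref{invpri} is stated, and that the limiting process $W$ having continuous sample paths indeed makes $\sup_t|W(t)|$ a proper (finite-valued) random variable.
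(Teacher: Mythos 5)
Your decomposition and overall strategy are exactly those of the paper: Hoeffding decomposition, triangle inequality, Theorem~\ref{invpri} plus the continuous mapping theorem for the linear part, and Lemma~\ref{remterm} for the degenerate part, finished off by Slutsky. The treatment of the linear part is complete and matches the paper's.

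There is, however, one genuine gap in the second half. You write that the corollary's hypotheses are ``exactly the assumptions feeding into'' Lemma~\ref{remterm} and invoke it directly, but the lemma is stated for a kernel satisfying the \emph{extended uniform variation condition}, and the relevant kernel here is the indicator $g(x_1,\ldots,x_m,t)=1_{[h(x_1,\ldots,x_m)\leq t]}$, which is not Lipschitz and does not inherit the variation condition for free from the corollary's hypotheses. The paper's proof spends most of its length precisely on this verification: one shows that
\begin{align*}
\sup\limits_{\lVert(x_1,\ldots,x_m)-(X'_1,\ldots,X'_m)\rVert\leq\epsilon}\left\vert 1_{[h(X'_1,\ldots,X'_m)\leq t]}-1_{[h(x_1,\ldots,x_m)\leq t]}\right\vert
\end{align*}
equals $1$ exactly when $h(X'_1,\ldots,X'_m)$ falls in an interval of length $2L\epsilon$ around $t$ (by the Lipschitz continuity of $h$), and then takes expectations to bound this by $2L\epsilon\sup_x h_F(x)$, uniformly in $t$; the analogous computation with the conditional densities $h_{F;X_{i_1},\ldots,X_{i_k}}$ (whose boundedness is assumed in the corollary for precisely this purpose) gives the \emph{extended} uniform variation condition. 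One also needs $U=H_F$ to be Lipschitz, which follows from $h_F<\infty$. None of this would fail, so your proof is repairable, but as written you have skipped the only substantive verification the corollary requires; without it the appeal to Lemma~\ref{remterm} is unjustified.
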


The proofs of all results in this section are given in Section \ref{sec:7}.

\section{Application: The Generalized Median Estimator}
\label{sec:4}

The generalized median ($GM$-) estimator was developed by Brazauskas and Serfling under independence as a robust estimator of the parameters of different distributions, for example the Pareto distribution or Log-Normal distribution (\cite{Bra2000}, \cite{Bra22000} and \cite{Serf2002}).

We will concentrate on the Pareto distribution, which is a very heavy tailed distribution often used in hydrology and other fields for modelling the tail of a distribution. Its distribution function is given by
\begin{align*}
F(x)=\begin{cases} 1-\left(\frac{\sigma}{x}\right)^\alpha, ~& x\geq \sigma \\
0, & x<\sigma
\end{cases},
\end{align*}
where $\alpha>0$ and $\sigma>0$. We assume $\sigma$ to be unknown and estimate it through the minimum of the sample.

We want to expand the $GM$-estimator to sequences of strong mixing random variables with Pareto distributed margins and estimate the tail index $\alpha$. Therefore we have to choose a kernel which is median unbiased. Like \cite{Bra2000} we choose the modified maximum likelihood estimator as kernel, which was shown to be median unbiased under independence, and use this result to show its asymptotical median unbiasedness under strong mixing, that is \[h(x_1,\ldots,x_m)=\frac{M_{2m-2}}{2m}\frac{1}{\left(\frac{1}{m}\sum_{i=1}^m\log x_i-\log \left(\min\left((x_1,\ldots,x_m \right)\right) \right)},\] where $M_{2m-2}$ is the median of the $\chi_{2m-2}^2$-distribution.

\begin{lemma}
For a sequence of strong mixing, Pareto distributed random variables $(X_n)_{n\in\mathbb{N}}$ with $\mathbb{E}\left\vert X_1\right\vert^{\rho}<\infty$ for a $\rho\geq 1$ and mixing coefficients $\alpha(l)=O(l^{-\delta})$ for a $\delta\geq 8$ the kernel

 $h(x_1,\ldots,x_m)=\frac{M_{2m-2}}{2m}\frac{1}{\left(\frac{1}{m}\sum_{i=1}^m\log x_i-\log \left(\min\left(x_1,\ldots,x_m \right)\right) \right)}$ is asymptotically median unbiased.
\end{lemma}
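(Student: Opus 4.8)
The plan is to reduce the statement to a statement about the median of $h(X_1,\dots,X_m)$ and exploit the known exact median-unbiasedness under independence established by \cite{Bra2000}. First I would recall that asymptotic median unbiasedness means $H_n^{-1}(1/2) \to \alpha$ in probability (equivalently, that $\mathbb{P}(h(X_{i_1},\dots,X_{i_m}) \le \alpha)$, suitably aggregated, tends to $1/2$), so the real content is to show that $\med(h(X_1,\dots,X_m)) = \alpha$ in the limiting sense, i.e. $H_F(\alpha^-) \le 1/2 \le H_F(\alpha)$ where $H_F$ is the distribution function of $h$ evaluated at $m$ independent Pareto$(\alpha,\sigma)$ variables. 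Since the kernel $h$ depends on $(X_{i_1},\dots,X_{i_m})$ only through the fixed-index copies $Y_1,\dots,Y_m$ (the distribution $H_F$ is defined via independent copies of $X_1$), and since under independence Brazauskas and Serfling showed $h$ is exactly median-unbiased for $\alpha$, we already have $H_F(\alpha) = 1/2$ (using continuity of $H_F$, which holds because the Pareto law is continuous and $h$ is a nonconstant smooth function of its arguments). So the bulk of the argument is to transfer the independence result to the mixing setting via the consistency of $H_n^{-1}(1/2)$.

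Next I would invoke Corollary \ref{glican} (or, more directly, Theorem \ref{bahad} with $p = 1/2$): under the stated moment and mixing hypotheses ($\mathbb{E}|X_1|^\rho < \infty$, $\rho \ge 1$, $\alpha(l) = O(l^{-\delta})$, $\delta \ge 8$), the empirical $U$-distribution function $H_n$ satisfies $\sup_t |H_n(t) - H_F(t)| = O_p(n^{-1/2})$, and in particular $H_n(t) \to H_F(t)$ in probability for every $t$. One must first check the hypotheses of these theorems apply to the specific Pareto kernel: $h$ is Lipschitz on the relevant domain (the Pareto support is $[\sigma,\infty)$, and on this set $\frac1m\sum\log x_i - \log\min(x_1,\dots,x_m) \ge 0$ with the singularity at the diagonal being integrable — one should verify $h$ is Lipschitz or at least satisfies the variation condition away from a null set, possibly after noting that the denominator is bounded below in probability), and that the partial densities $h_{F;X_2,\dots,X_k}$ are bounded, which follows from the smoothness of the log transform and the boundedness of the Pareto density on compact subsets of the support together with the $\chi^2$-type structure of $h$. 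Given $H_n \Rightarrow H_F$ pointwise and $H_F$ continuous and strictly increasing in a neighbourhood of $\alpha$, a standard argument (continuity of the inverse map at a point of strict monotonicity) yields $H_n^{-1}(1/2) \xrightarrow{\,p\,} H_F^{-1}(1/2) = \alpha$.

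Finally I would assemble the pieces: the generalized median estimator for the Pareto tail index is precisely the $GL$-statistic $T(H_n) = H_n^{-1}(1/2)$ with kernel $h$ as above, and the previous step gives $T(H_n) \xrightarrow{\,p\,} \alpha = T(H_F)$, which is the definition of asymptotic median unbiasedness. The main obstacle, I expect, is not the mixing part — that is handled by the already-established Theorems \ref{bahad} and \ref{asynom} and Corollary \ref{glican} — but rather verifying that this particular kernel satisfies their regularity hypotheses: the kernel has a singularity on the diagonal $\{x_1 = \dots = x_m\}$ where the denominator vanishes, so strict Lipschitz-continuity on all of $[\sigma,\infty)^m$ fails, and one must argue either that the event that the sample is near-diagonal has negligible probability, or truncate/restrict the kernel and control the error, or verify the (extended) variation condition directly, which is weaker than Lipschitz and may still hold because the non-Lipschitz set is a lower-dimensional manifold carrying no mass. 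A secondary technical point is checking boundedness of the mixed partial densities $h_{F;X_{i_2},\dots,X_{i_k}}$, which requires a change-of-variables computation but no new ideas.
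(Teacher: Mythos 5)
Your proposal follows essentially the same route as the paper: both transfer the exact median-unbiasedness of the Brazauskas--Serfling kernel under independence (i.e.\ $H_F(\alpha)=1/2$) to the mixing setting by establishing convergence of $H_n$ to $H_F$ (the paper via $\mathbb{E}(H_n-H_F)^2\to 0$, a Glivenko--Cantelli argument and Example 1 of Pollard; you via Corollary \ref{glican} and continuity of the quantile map at a point of strict monotonicity). The regularity issue you flag --- that this kernel is unbounded and non-Lipschitz near the diagonal, so the hypotheses of the cited results are not literally verified --- is real, but it is left equally unaddressed in the paper's own one-line proof.
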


\begin{proof}
$\newline$
We have $\mathbb{E}\left(H_n-H_F\right)^2\longrightarrow 0$ using the same arguments as in Lemma \ref{bahad}. With arguments of Glivenko-Cantelli type this implies 
\begin{align*}
\sup\limits_{t}\left\vert \mathbb{E}(H_n(t))-H_F(t)\right\vert\longrightarrow 0.
\end{align*}
Following Example 1 of \cite{Pol1984} the proof is completed. \qed
\end{proof}
\vspace*{0.1cm}

The $GM$-estimator of the parameter $\alpha$ is then given by
\begin{align*}
\hat{\alpha}_{GM}=\med(h(X_{i_1},\ldots,X_{i_m})),
\end{align*}
which can be expressed as an $GL$-statistic by choosing $J=0,~d=1,~a_1=1$, and $p_1=\tfrac{1}{2}$. 
Applying Theorem \ref{main} we have
\begin{align*}
\sqrt{N}\left(\hat{\alpha}_{GM}-\alpha\right)\stackrel{D}{\longrightarrow}N(0,\sigma^2_{GM}),
\end{align*}
\begin{align*}
\sigma^2_{GM}=&\frac{m^2}{h_F^2(\alpha)}\Bigg(Var(\mathbb{P}(h(Y_1,\ldots,Y_m)\leq \alpha\vert Y_1=X_1))\\
&+2\sum_{j=1}^\infty Cov\bigl(\mathbb{P}(h(Y_1,\ldots,Y_m)\leq \alpha\vert Y_1=X_1),\\
&\phantom{+2\sum_{j=1}^\infty Cov(\mathbb{P})}\mathbb{P}(h(Y_1,\ldots,Y_m)\leq \alpha\vert Y_1=X_{j+1})\bigr)\Bigg).
\end{align*}

The results concerning robustness given by \cite{Bra22000} remain valid since the kernel is unchanged. Additionally one can show that the influence function of the $GM$-estimator is bounded (cf. \cite{Serf1984}).

\vspace{0.3cm}

In the following simulations we compute confidence intervals for the tail index $\alpha$ using subsampling (cf. \cite{Poli1994}). We show the coverage probability and the length of the confidence interval for different block lengths in subsampling and three different kernel dimensions of the generalized median estimator, that is $m=2,3,4$. The underlying n=100 random variables we compute as independent, identically Pareto-distributed with $\alpha=1$ and $\sigma=2$ and also from an AR(1)-process with autocorrelation coefficient $\rho=0.2$ and Pareto-distributed margins. The simulation is repeated 500 times. 
\vspace{0.1cm}
The procedure of subsampling is as follows:

Because $\sqrt{n}\left(\hat{\alpha}_{GM}-\alpha\right)$ converges against an unknown distribution, we estimate the quantiles of the distribution the following way: we first choose a blocklength $b=b_n$ with $b_n\rightarrow \infty$ and $\tfrac{b_n}{n}\rightarrow 0$ for $n\rightarrow \infty$. Then we calculate the $GM$-estimator of $\alpha$ for each of the $n-b+1$ subsamples consisting of $b$ consecutive data values, getting a vector of estimates $\left(\hat{\alpha}_{GM}^1,\ldots,\hat{\alpha}_{GM}^{n-b+1}\right)$. Using 
\begin{align*}
L_n(t)=\frac{1}{n-b+1}\sum_{i=1}^{n-b+1}1_{[\sqrt{b}\left(\hat{\alpha}_{GM}^i-\hat{\alpha}\right)]\leq t}
\end{align*}
the quantiles $q^\ast_\gamma=L_n^{-1}(\gamma)$ are calculated, where $\hat{\alpha}$ is the $GM$-estimate for $\alpha$ derived from the whole sample.

The confidence interval CI for a confidence level $1-\gamma$ is then
\begin{align*}
CI=\left[\hat{\alpha}-\frac{q^\ast_{1-\tfrac{\gamma}{2}}}{\sqrt{n}};\hat{\alpha}-\frac{q^\ast_{\tfrac{\gamma}{2}}}{\sqrt{n}}\right],
\end{align*}
resulting from $\mathbb{P}\left(q^\ast_{\tfrac{\gamma}{2}}\leq \sqrt{n}\left(\hat{\alpha}-\alpha\right)\leq q^\ast_{1-\tfrac{\gamma}{2}}\right)\longrightarrow 1-\gamma$.

These results are compared with the case $m=n$ corresponding to the maximum-likelihood (ML) estimator.

All simulations were done in R 3.0.1 using the packages VGAM and fExtremes and the algorithm of \cite{Wild2013} for the generalized median estimator. We need to mention that the results can fluctuate up to 0.02 because of the moderate number of observation runs (500).

\begin{table}
\caption{Confidence interval length and coverage probability of the 90\% and 95\% confidence intervals from 100 independent, identically Pareto(2,1)-distributed random variables using subsampling under different block lengths for 500 repetitions and kernel dimension 2, 3, 4 and n}
	\centering

		\begin{tabular}[c]{| c | c | c | c | c |}
		\hline
			\multirow{2}{*}{block length} & \multicolumn{2}{| c |}{90\% confidence interval} & \multicolumn{2}{| c |}{95\% confidence interval}\\
			\cline{2-5}
			  & coverage probability & length &coverage probability & length \\
			\hline
		\multicolumn{5}{|c|}{m=2}\\
			  \hline
				15 & 0.776 & 0.769 & 0.848 & 0.894\\
				\hline
				20 & 0.738 & 0.701& 0.818 & 0.795 \\
				\hline
				\hline
		\multicolumn{5}{|c|}{m=3}\\
				\hline
				15 & 0.778& 0.736& 0.812 & 0.845 \\
				\hline
				20 &0.770 &0.674 &0.792 &0.738 \\
				\hline
				\hline
		\multicolumn{5}{|c|}{m=4}\\
				\hline
				15 &0.781 &0.720 &0.843  &0.814 \\
				\hline
				20 &0.772 &0.683 &0.805&0.697 \\
				\hline
				\hline
		\multicolumn{5}{|c|}{m=n}\\
				\hline
				15 &0.834 &0.666 &0.846  &0.734 \\
				\hline
				20 &0.792 &0.585 &0.818 &0.658 \\
				\hline
		\end{tabular}
	\label{tab:Cbounds1}

\end{table}

\begin{table}
\caption{Confidence interval length and coverage probability of the 90\% and 95\% confidence intervals from 100 random variables from an AR(1)-process with $\rho=0.2$ and  Pareto(2,1)-distributed margins using subsampling under different block lengths for 500 repetitions and kernel dimension 2, 3, 4 and n}
	\centering

		\begin{tabular}[c]{| c | c | c | c | c |}
		\hline
			\multirow{2}{*}{block length} & \multicolumn{2}{| c |}{90\% confidence interval} & \multicolumn{2}{| c |}{95\% confidence interval}\\
			\cline{2-5}
			  & coverage probability & length &coverage probability & length \\
		\hline
		\multicolumn{5}{|c|}{m=2}\\
				\hline
				15 &0.756 &0.874 &0.778 &1.005 \\
				\hline
				20 & 0.756 &0.789 &0.770 &0.878 \\
				\hline
				\hline
		\multicolumn{5}{|c|}{m=3}\\
				\hline
				15 &0.794 &0.850& 0.764&0.950 \\
				\hline
				20 & 0.724&0.779 & 0.780&0.864\\
				\hline
				\hline
		\multicolumn{5}{|c|}{m=4}\\
				\hline
				15 &0.803 &0.838 &0.811 &0.943 \\
				\hline
				20 &0.769 &0.744 &0.776 &0.822 \\
				\hline
				\hline
		\multicolumn{5}{|c|}{m=n}\\
				\hline
				15 &0.790 &0.840 &0.814  &0.994 \\
				\hline
				20 &0.770 &0.749 &0.796 &0.853 \\
				\hline
		\end{tabular}
	\label{tab:Cbounds2}
	
\end{table}

First we investigate the efficiency of the $GM$-estimator in comparison with the classical maximum-likelihood estimator corresponding to the case $m=n$. For this we have a look at the coverage probability and the length of the confidence interval under data from an ideal model.
As expected we see in Tables \ref{tab:Cbounds1} and \ref{tab:Cbounds2} that under independence the coverage probability and the length of the confidence interval of the $GM$-estimator get better for increasing $m$, being best when $m=n$, the case of the $ML$-estimator. Nevertheless even for small values of $m$ the efficiency of the $GM$-estimator is close to that of the $ML$-estimator. 

Under slight dependence ($\rho=0.2$) the $GM$-estimator with $m=4$ performs almost as well as the $ML$-estimator with $m=n$ and the length of the confidence interval is sometimes even smaller. Note that in the case of dependence, the $GM$-estimator for $m=n$ is not the $ML$-estimator, since it was constructed to maximize the likelihood under independence. Nevertheless, this estimator for $m=n$ is widely applied also under dependence and we use it for comparison.
In general the coverage probability and also the length of the confidence interval of the $GM$-estimator are not influenced very much by the size of $m$; for the smallest choice of $m$ the coverage probability and the length of the confidence interval of the $GM$-estimator are rather close to that of the case $m=n$.

For independence or moderate dependence ($\rho=0.2$), the coverage probability decreases  when the block length $b$ increases. For stronger dependence ($\rho=0.8$), the longer block length ($b=20$) gives better results.

We also tested the case where $\rho=0.8$, but the results for a sample size $n=100$ were very poor for all cases of $m$ with a coverage probability always about 0.3 and a length of the confidence interval between 3 and 10, and therefore they are omitted here.

Additionally we compared the robustness of the $ML$-estimator ($m=n$) with the $GM$-estimator for $m=2$, the most robust case. We contaminate a sample by adding a value $y_i$ of the interval $(0,100]$, and calculate the average coverage probability, that is 
\begin{align*}
{CP}(i)=\frac{1}{n}\sum_{j=1}^n\left(1_{[CI1_j,CI2_j]}(\alpha)-1_{[CI1_j(i),CI2_j(i)]}(\alpha)\right),
\end{align*}
where $CI1_j$ and $CI2_j$ are the bounds of the confidence interval calculated for the $jth$ sample $(X_j^{(1)},\ldots,X_j^{(n)})$ and $CI1_j(i)$ and $CI2_j(i)$ are the bounds of the confidence interval calculated for the $jth$ sample contaminated by $y_i$, $(X_j^{(1)},\ldots,X_j^{(n)},y_i)$, for a confidence level of $0.95$ respectively and $j=1,\ldots,100$. The confidence intervals were again computed by subsampling with a block length of 15. This method is analogous to classical sensitivity curves, but focuses on the coverage probability. The results can be found in Figure \ref{fig:SenCurve}.

\begin{figure}
	\centering
		\includegraphics[width=1.00\textwidth]{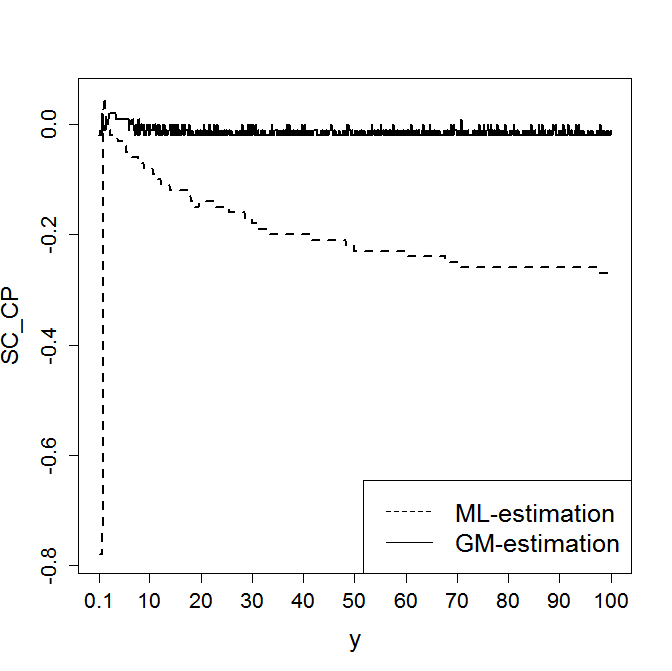}
		\caption{Coverage probability for contamination by one observation $y$ in a sample of size $n=100$}
	\label{fig:SenCurve}
\end{figure}

Examining the robustness for data which are contaminated by a value $y$ we can see that for the $ML$-estimator in all three dependence cases the coverage probability flattens for increasing $y$ but does not reach a constant value. This indicates a non-robust behaviour. The opposite can be seen for the $GM$-estimator, which coverage probability becomes constant when $y$ exceeds 5 and only fluctuates between two values. The behaviour of both estimators close to zero is similar. When $y$ decreases towards the lower bound of the distribution, both estimators have large deviations between the contaminated coverage probability and the uncontaminated one. Nevertheless the results concerning the robustness of the $GM$-estimator with $m=2$ are confirmed by the simulations. The results for $m=3,4,5$ were very similar, showing also a robust behaviour of the estimator by a constant coverage probability, and are therefore omitted here.

Altogether we can say that the $GM$-estimator is a good alternative to the $ML$-estimator and has similar coverage probability as well as length of the confidence interval even for small choices of $m$. These small choices give us an estimator, which is easy to calculate and for which we have shown that it is robust in contrast to the $ML$-estimator. This is underlined by the results in Figure \ref{fig:SenCurve}.

\section{Preliminary Results}
\label{sec:6}

In this section we state some results, which will help us to prove or main results.

First of all we want to use the (extended) variation condition not only for the kernel $h$, but also for the kernels $g_k$, $1\leq k\leq m$, of the Hoeffding decomposition. For that the following lemma is helpful.

\begin{lemma}\label{varcon}
$\newline$
If the kernel $h$ satisfies the extended variation condition, then the kernels $g_k$, $1\leq k \leq m$, satisfy it as well.
\end{lemma}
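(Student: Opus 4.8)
The plan is to proceed by induction on $k$. The base case $k=1$ is immediate: $g_1(x_1) = \tilde h_1(x_1) = \mathbb{E}(h(x_1, Y_2, \ldots, Y_m)) - \theta$, and for the extended variation condition one estimates, for $2 \le j \le m$,
\begin{align*}
&\mathbb{E}\Bigl(\sup_{|x_{i_1} - Y_{i_1}| \le \delta} \bigl| g_1(x_{i_1}) - g_1(Y_{i_1}) \bigr|\Bigr)\\
&\quad = \mathbb{E}\Bigl(\sup_{|x_{i_1} - Y_{i_1}| \le \delta} \bigl| \mathbb{E}(h(x_{i_1}, Y_2', \ldots, Y_m') - h(Y_{i_1}, Y_2', \ldots, Y_m') \mid x_{i_1}, Y_{i_1}) \bigr|\Bigr),
\end{align*}
pull the supremum inside the inner expectation, and apply the extended variation condition of $h$ (with $k=2$, after relabelling so that the varied argument sits in the first slot and the rest are independent copies) to bound this by $L' \delta$. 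The ordinary variation condition for $g_1$ follows in the same way from the ordinary variation condition for $h$, integrating out the last $m-1$ coordinates.

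For the induction step, I would use that $g_k(x_1, \ldots, x_k) = \tilde h_k(x_1, \ldots, x_k) - \sum_{\ell=1}^{k-1} \sum_{1 \le i_1 < \cdots < i_\ell \le k} g_\ell(x_{i_1}, \ldots, x_{i_\ell})$, so $g_k$ is a finite linear combination (with coefficients $\pm 1$) of $\tilde h_k$ and of the lower-order kernels $g_\ell$, $\ell < k$, evaluated at subtuples of the arguments. By the induction hypothesis each $g_\ell$ satisfies the (extended) variation condition, and a key sub-claim is that the variation conditions are stable under (i) taking finite sums — the supremum of a sum is bounded by the sum of suprema, and expectation is linear — and (ii) restricting/embedding arguments, since varying one coordinate of $g_\ell(x_{i_1}, \ldots, x_{i_\ell})$ inside a $k$-tuple either moves a coordinate that $g_\ell$ actually depends on (covered by $g_\ell$'s condition) or moves an inert coordinate (supremum is then zero). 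So it remains to check that $\tilde h_k(x_1, \ldots, x_k) = \mathbb{E}(h(x_1, \ldots, x_k, Y_{k+1}, \ldots, Y_m)) - \theta$ satisfies both conditions; for the ordinary condition one again pulls $\sup$ through $\mathbb{E}(\cdot \mid x_1, \ldots, x_k)$ and uses $h$'s ordinary variation condition, and for the extended condition one uses $h$'s extended variation condition at level $k$ (this is precisely why the extended condition is stated for every $2 \le k \le m$ rather than just $k=m$).

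The main obstacle is bookkeeping rather than depth: one must be careful that when the extended variation condition is invoked for $\tilde h_k$ or for a lower-order $g_\ell$ inside a larger tuple, the arguments that are NOT being varied really are an appropriate mixture of the $X_i$'s (the "fixed" block) and independent copies $Y_i$'s (the integrated-out block), matching the exact form of the definition — and that the index set $i_1 < \cdots < i_m$ and the position of the varied coordinate can be normalized by symmetry of the kernels. A secondary point to handle cleanly is that the expectation of a supremum of a sum is genuinely $\le$ the sum of expectations of the suprema (Minkowski / triangle inequality under the sup norm), and that all the constants $L, L', \epsilon_0, \delta_0$ can be replaced by common ones over the finitely many terms by taking maxima and minima respectively. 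None of these steps requires new analytic input beyond the definitions, so the lemma is essentially a structural consequence of the linearity of the Hoeffding decomposition together with the conditioning trick $\sup \mathbb{E}(\cdot \mid \cdot) \le \mathbb{E}(\sup(\cdot) \mid \cdot)$.
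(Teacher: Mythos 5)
Your proposal is correct and follows essentially the same route as the paper: induction on $k$, with the base case handled by pulling the supremum inside the conditional expectation and applying the (extended) variation condition of $h$, and the induction step reducing to $\tilde h_k$ via linearity of the variation condition over the Hoeffding decomposition (which the paper justifies by citing the vector-space property from Wendler's thesis, while you verify the closure under finite sums directly). Your extra bookkeeping about embedding lower-order kernels into larger tuples is a point the paper's proof passes over silently, but it does not change the argument.
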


\begin{proof}
$\newline$
The proof will be made by mathematical induction.
Initially let $k=1$. We had defined $g_1$ as $g_1(x_1)=\mathbb{E}(h(x_1,Y_2,\ldots,Y_m))-\theta$. It is
\begin{align*}
&\mathbb{E}\left(\sup\limits_{\lVert(x_1,\ldots,x_m)-(X_1',\ldots,X_m')\rVert\leq \epsilon}\left| g_1(x_1)-g_1(X_1')\right|\right)\\
\leq&\mathbb{E}\left(\sup\limits_{\lVert x_1-X_1'\rVert\leq \epsilon} \mathbb{E}\left|(h(y_1,Y_2,\ldots,Y_m)-h(y_1',Y_2,\ldots,Y_m))\vert y_1=x_1, y_1'=X_1'\right|\right)\\
\leq&\mathbb{E}\left(\sup\limits_{\lVert x_1-X_1'\rVert\leq \epsilon} \left| h(x_1,Y_2,\ldots,Y_m)-h(X_1',Y_2,\ldots,Y_m)\right|\right)\\
\leq& L\epsilon,
\end{align*}
because $h$ satisfies the variation condition. So $g_1$ satisfies the extended variation condition.

$\newline$
Now let $g_{k-1}$ satisfy the extended variation condition. We show that $g_k$ also satisfies it:
\begin{align*}
g_k(x_1,\ldots,x_k)=&\mathbb{E}(h(x_1,\ldots,x_k,Y_{k+1},\ldots,Y_m))-\theta\\
&-\sum_{i=1}^k{g_1(x_i)}-\ldots -\sum_{1\leq i_1 \leq \ldots \leq i_{k-1}\leq k}{g_{k-1}(x_{i_1},\ldots,x_{i_{k-1}})}.
\end{align*}
The space of the functions satisfying the (extended) variation condition is a vector space (cf. \cite{Wen2011}) and since we know that all kernels up to $g_{k-1}$ satisfy the variation condition, it is sufficient to show that

 $\mathbb{E}(h(x_1,\ldots,x_k,Y_{k+1},\ldots,Y_m))-\theta$ satisfies the extended variation condition. 

\begin{align*}
&\mathbb{E}\left(\sup\limits_{\lvert x_1-Y_1\rvert\leq \delta}\Biggl| \mathbb{E}(h(y_1,X_2,\ldots,X_k,Y_{k+1},\ldots,Y_m)\vert y_1=x_1)\right.\\
&\left.\phantom{\mathbb{E}\left(\right)\mathbb{E}(h(y_1,X_2,\ldots,X_k))}-\mathbb{E}(h(y_1',X_2,\ldots,X_k,Y_{k+1},\ldots,Y_m)\vert y_1'=Y_1)\Biggr|\right)\\
&\leq \mathbb{E}\left(\sup\limits_{\lvert x_1-Y_1\rvert\leq \delta}\Biggl| h(x_1,X_2,\ldots,X_k,Y_{k+1},\ldots,Y_m)\right.\\
&\left.\phantom{\mathbb{E}\left(\right)\mathbb{E}(h(y_1,X_2,\ldots,X_k))}-h(Y_1,X_2,\ldots,X_k,Y_{k+1},\ldots,Y_m)\Biggr|\right)\\
&\leq L'\delta,
\end{align*}
since $h$ satisfies the extended variation condition. 
\end{proof}

\vspace*{0.5cm}

\begin{remark}
All results shown before for the extended variation condition without parameter $t$ remain true for the extended uniform variation condition.
\end{remark}

To ultimately show the asymptotic normality of $U$-statistics of strongly mixing random variables, we will first generalize some lemmas proved by \cite{Wen2011} respectively \cite{Deh2010} or \cite{Wen2011a} from the case $m=2$ to arbitrary $m$.

\vspace*{0.25cm}

First we need a covariance inequality, which we can establish by the coupling technique. A similar result for absolutely regular variables can be found in \cite{Yosh1976}.
Here we will follow \cite{Wen2011} and expand the lemma to the case $m\geq 2$, meaning we will treat $g_k$ for $2\leq k\leq m$. The proof is analogous to \cite{Wen2011} using the extended variation condition instead of the ordinary one and is therefore omitted.

\begin{lemma}\label{covineq}
$\newline$
Let $(X_n)_{n\in\mathbb{N}}$ be a strong mixing sequence of random variables with $\mathbb{E}\lvert X_1\rvert^{\rho}<\infty$ for a $\rho>0$ and $h$ a bounded kernel, which satisfies the extended variation condition. Moreover set $l=\max \lbrace i_{(2)}-i_{(1)}, i_{(2k)}-i_{(2k-1)}\rbrace$, where $\lbrace i_1,\ldots,i_{2k}\rbrace=\lbrace i_{(1)},\ldots,i_{(2k)} \rbrace$ and $i_{(1)}\leq \ldots \leq i_{(2k)}$. Then there exists a constant $C$, such that for all $2\leq k\leq m$
\begin{align*}
\lvert\mathbb{E}\left(g_k(X_{i_1},\ldots, X_{i_k})g_k(X_{i_{k+1}},\ldots,X_{i_{2k}})\right)\rvert\leq C \alpha^{\frac{\rho}{2\rho+1}}(l).
\end{align*}
\end{lemma}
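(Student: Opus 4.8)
\emph{Strategy and reduction.} If $l=0$ there is nothing to prove: a bounded kernel has bounded Hoeffding projections (each $\tilde h_j$ is bounded by $2\lVert h\rVert_\infty$ and every $g_j$ is a fixed linear combination of $\tilde h_1,\ldots,\tilde h_j$), so the left--hand side is at most $\lVert g_k\rVert_\infty^2$. Assume $l\ge 1$. Since a stationary strongly mixing process and its time reversal have the same mixing coefficients and $g_k$ is symmetric, we may assume $l=i_{(2)}-i_{(1)}$ (the case $l=i_{(2k)}-i_{(2k-1)}$ follows by reversing time); and, relabelling the two blocks if necessary, we may assume that the smallest index $i_{(1)}$ belongs to $\{i_1,\ldots,i_k\}$. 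Because $l\ge 1$, the value $i_{(1)}$ then occurs only once among $i_1,\ldots,i_{2k}$ and every other index is $\ge i_{(1)}+l$, so $X_{i_{(1)}}$ is $\mathcal F_{-\infty}^{i_{(1)}}$--measurable while all other variables entering the two kernels are $\mathcal F_{i_{(1)}+l}^{\infty}$--measurable. Write $g_k(X_{i_1},\ldots,X_{i_k})=g_k(X_{i_{(1)}},Z_1)$, where $Z_1$ collects the remaining block--one variables, and let $\mathcal F_B$ be the $\sigma$--field generated by all variables other than $X_{i_{(1)}}$ occurring in either kernel.

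\emph{Decoupling by coupling, and degeneracy.} Using a coupling lemma for strongly mixing sequences in the spirit of \cite{Yosh1976} and \cite{Wen2011} (see also \cite{Brad2007}), one constructs on an enlarged probability space a random variable $\tilde X$ with the law of $X_1$, independent of $\mathcal F_B$, and close to $X_{i_{(1)}}$; concretely, truncating at a level $T$, discretising on a grid of width $\lambda$ inside $[-T,T]$ and coupling maximally cell by cell yields, up to absolute constants,
\begin{align*}
\mathbb P\bigl(\lvert X_{i_{(1)}}-\tilde X\rvert>\lambda\bigr)\le \tfrac{T}{\lambda}\,\alpha(l)+T^{-\rho}\,\mathbb E\lvert X_1\rvert^{\rho}.
\end{align*}
Now decompose
\begin{align*}
\mathbb E\bigl(g_k(X_{i_{(1)}},Z_1)\,g_k(X_{i_{k+1}},\ldots,X_{i_{2k}})\bigr)&=\mathbb E\bigl(\bigl(g_k(X_{i_{(1)}},Z_1)-g_k(\tilde X,Z_1)\bigr)g_k(X_{i_{k+1}},\ldots,X_{i_{2k}})\bigr)\\
&\quad+\mathbb E\bigl(g_k(\tilde X,Z_1)\,g_k(X_{i_{k+1}},\ldots,X_{i_{2k}})\bigr).
\end{align*}
Conditioning the last term on $\mathcal F_B$ and using that $\tilde X$ is independent of $\mathcal F_B$ with law $F$, while $Z_1$ and $g_k(X_{i_{k+1}},\ldots,X_{i_{2k}})$ are $\mathcal F_B$--measurable, its inner conditional expectation equals $g_k(X_{i_{k+1}},\ldots,X_{i_{2k}})\int g_k(x,Z_1)\,dF(x)=0$ by the degeneracy of the Hoeffding projection; hence the last term vanishes.

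\emph{Bounding the remainder and optimising.} In the first term bound $\lvert g_k(X_{i_{k+1}},\ldots,X_{i_{2k}})\rvert\le\lVert g_k\rVert_\infty$ and split according to whether $\lvert X_{i_{(1)}}-\tilde X\rvert\le\lambda$. On $\{\lvert X_{i_{(1)}}-\tilde X\rvert>\lambda\}$ the increment is at most $2\lVert g_k\rVert_\infty$; on the complement it is at most $\sup_{\lvert x-\tilde X\rvert\le\lambda}\lvert g_k(x,Z_1)-g_k(\tilde X,Z_1)\rvert$, whose expectation is $\le L'\lambda$ because, by Lemma \ref{varcon}, $g_k$ satisfies the extended variation condition, with $\tilde X$ playing the role of the independent copy there (it has the law of $X_1$ and is independent of $Z_1$). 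Collecting the estimates,
\begin{align*}
\bigl\lvert\mathbb E\bigl(g_k(X_{i_1},\ldots,X_{i_k})\,g_k(X_{i_{k+1}},\ldots,X_{i_{2k}})\bigr)\bigr\rvert\le \lVert g_k\rVert_\infty\Bigl(L'\lambda+2\lVert g_k\rVert_\infty\bigl(\tfrac{T}{\lambda}\alpha(l)+T^{-\rho}\mathbb E\lvert X_1\rvert^{\rho}\bigr)\Bigr)
\end{align*}
up to constants. Choosing first $\lambda=\sqrt{T\,\alpha(l)}$ and then $T$ of order $\alpha(l)^{-1/(2\rho+1)}$ balances the terms $\lambda$, $\tfrac{T}{\lambda}\alpha(l)$ and $T^{-\rho}$ and makes the right--hand side of order $\alpha(l)^{\rho/(2\rho+1)}$, which is the asserted bound for a suitable $C$.

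\emph{Main obstacle.} The delicate point is the coupling step for \emph{strong} mixing: unlike the absolutely regular case, where Berbee's lemma gives a coupling with error $O(\beta(l))$ directly, for $\alpha$--mixing one only obtains the weaker, grid--dependent estimate above, and the fact that $X_{i_{(1)}}$ has merely $\rho$ finite moments forces the truncation at level $T$; it is precisely the optimisation over $T$ and $\lambda$ that produces the exponent $\tfrac{\rho}{2\rho+1}$ rather than $\alpha(l)$ itself. Everything else — degeneracy annihilating the main term and the extended variation condition absorbing the small perturbation — is routine and mirrors \cite{Wen2011}, with $m=2$ replaced by general $m$ and the ordinary variation condition by the extended one.
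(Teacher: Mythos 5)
Your proposal is correct and follows exactly the route the paper intends: it omits the proof, stating only that one argues as in \cite{Wen2011} via the coupling technique, replacing the ordinary variation condition by the extended one, and your reconstruction — isolating the extremal index, coupling it with a truncated, discretised independent copy, killing the main term by complete degeneracy of $g_k$, absorbing the perturbation via the extended variation condition, and optimising $T$ and $\lambda$ to obtain the exponent $\tfrac{\rho}{2\rho+1}$ — is precisely that argument. Your identification of the extended variation condition as the ingredient needed because the remaining block-one arguments $X_{i_2},\ldots,X_{i_k}$ retain their dependent joint law is exactly the point of the paper's extension from $m=2$ to general $m$.
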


\begin{lemma}\label{finalcov}
$\newline$
Let the kernel $h$ be bounded and satisfy the extended variation condition.
Let $(X_n)_{n\in\mathbb{N}}$ be a sequence of strong mixing random variables with $\mathbb{E}\left|X_1\right|^{\rho}<\infty$ for a $\rho>0$ and let $\sum_{l=0}^nl\alpha^{\frac{\rho}{2\rho+1}}(l)=O(n^{\gamma})$  for a $\gamma\geq 0$ hold.
Then for all $2\leq k\leq m$
\begin{align*}
\sum_{i_1,\ldots,i_{2k}=1}^n\left|\mathbb{E}(g_k(X_{i_1},\ldots,X_{i_k})g_k(X_{i_{k+1}},\ldots,X_{i_{2k}}))\right|=O(n^{2k-2+\gamma}).
\end{align*}
\end{lemma}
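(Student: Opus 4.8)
The plan is to bound the multiple sum by counting, for each configuration of indices $i_1,\ldots,i_{2k}$, the quantity $l = l(i_1,\ldots,i_{2k})$ defined in Lemma \ref{covineq} and then summing $C\alpha^{\frac{\rho}{2\rho+1}}(l)$ over all $n^{2k}$ tuples. First I would fix the ordered values $i_{(1)}\leq\cdots\leq i_{(2k)}$ and recall that $l$ is the larger of the two gaps $i_{(2)}-i_{(1)}$ and $i_{(2k)}-i_{(2k-1)}$ — that is, the gap that isolates (from the left) the smaller block of $k$ indices or (from the right) the larger block. Since the estimate in Lemma \ref{covineq} only depends on this one gap, the strategy is: pick the index that achieves the maximal isolating gap, pick the gap length $l$, and bound the number of ways to place the remaining $2k-2$ indices.

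The key steps in order: (1) Invoke Lemma \ref{covineq} to replace $|\mathbb{E}(g_k g_k)|$ by $C\alpha^{\frac{\rho}{2\rho+1}}(l)$. (2) Reorganize the sum over $(i_1,\ldots,i_{2k})\in\{1,\ldots,n\}^{2k}$: up to a combinatorial factor depending only on $k$ (the number of ways to assign the labels $1,\ldots,2k$ to the ordered positions, at most $(2k)!$), it suffices to sum over $i_{(1)}\leq\cdots\leq i_{(2k)}$. (3) For fixed $l$, bound the number of such ordered tuples with $\max\{i_{(2)}-i_{(1)},\, i_{(2k)}-i_{(2k-1)}\} = l$: there are at most $n$ choices for the anchoring endpoint, at most $l+1$ (hence $O(l)$, or $O(1)$ if we are careful) choices for the index across the distinguished gap, and at most $n^{2k-3}$ choices for the remaining $2k-3$ indices, all of which must lie within the same block and so contribute at most a factor $n^{2k-3}$. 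This gives a bound of order $n^{2k-2} \cdot l \cdot \alpha^{\frac{\rho}{2\rho+1}}(l)$ after we additionally account for one free index that ranges over all of $\{1,\ldots,n\}$; more precisely the count is $O(n^{2k-2})\cdot(\text{number of }l\text{-gap configurations per unit})$. (4) Sum over $l$ from $0$ to $n$: the hypothesis $\sum_{l=0}^n l\,\alpha^{\frac{\rho}{2\rho+1}}(l) = O(n^\gamma)$ then yields the claimed $O(n^{2k-2+\gamma})$.

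The main obstacle will be step (3): getting the index-counting bookkeeping exactly right so that the power of $n$ comes out to $2k-2$ and the residual sum over $l$ is precisely $\sum_l l\,\alpha^{\frac{\rho}{2\rho+1}}(l)$, not $\sum_l l^2 \alpha^{\cdots}(l)$ or with a spurious extra factor of $n$. The delicate point is that $l$ is the max of two gaps, so one must argue that whichever gap realizes the maximum, the $k$ indices on the "short side" of that gap are confined to an interval, while the contiguous run on the other side of the distinguished index is what must be summed freely. A clean way is to split into the two cases ($l$ attained by the left gap vs. by the right gap), handle them symmetrically, and in each case observe: one endpoint gives $n$ choices, the index immediately across the distinguished gap gives $O(l)$ choices, the indices forced to lie in the isolated block give $O(1)$ relative placements but their overall location is tied down, while the remaining indices on the long side give the factor $n^{2k-3}$. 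Because one index must be summed over the whole range we pick up $n\cdot n^{2k-3}\cdot l = n^{2k-2} l$ per value of $l$, and summing gives the result. I would also note that since $g_k$ for $2\le k\le m$ has mean zero in each argument, the terms where $l$ is small are not the issue — the whole point of the weighting by $\alpha^{\frac{\rho}{2\rho+1}}(l)$ is to make the large-$l$ tail summable, which is exactly what the hypothesis on $\sum_l l\,\alpha^{\frac{\rho}{2\rho+1}}(l)$ provides.

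Finally, I would remark that this lemma is the $m$-dimensional analogue of the corresponding bivariate estimate in \cite{Wen2011}, and that combined with an analogous (and easier) bound for the diagonal-type terms it will feed into the proof that all degenerate parts of the Hoeffding decomposition in Theorem \ref{asynom} are $o_p(\sqrt n)$, since $O(n^{2k-2+\gamma})$ divided by $\binom{n}{k}^2 \sim n^{2k}$ and multiplied by $n$ tends to zero provided $\gamma<1$.
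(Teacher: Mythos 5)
Your proposal follows essentially the same route as the paper's proof: bound each summand via Lemma \ref{covineq}, group the tuples by the gap value $l=\max\{i_{(2)}-i_{(1)},\,i_{(2k)}-i_{(2k-1)}\}$, count $O(l\,n^{2k-2})$ tuples per value of $l$ (the paper's bookkeeping is $(2k)!\cdot n^{2}\cdot l\cdot n^{2k-4}$, which agrees with your final tally), and conclude with the hypothesis $\sum_{l}l\,\alpha^{\frac{\rho}{2\rho+1}}(l)=O(n^{\gamma})$. The argument is correct and matches the paper.
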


\begin{proof}
$\newline$
Set $\lbrace i_1,\ldots,i_{2k}\rbrace=\lbrace i_{(1)},\ldots,i_{(2k)}\rbrace$ with $i_{(1)}\leq \ldots \leq i_{(2k)}$. We can rewrite the above sum as
\begin{align*}
&\sum_{i_1,\ldots,i_{2k}=1}^n\left|\mathbb{E}(g_k(X_{i_1},\ldots,X_{i_k})g_k(X_{i_{k+1}},\ldots,X_{i_{2k}}))\right|\\
&=\sum_{l=0}^n\sum_{\stackrel{i_1,\ldots,i_{2k}=1}{\max\left\{ i_{(2)}-i_{(1)},i_{(2k)}-i_{(2k-1)}\right\}=l}}^n\left|\mathbb{E}(g_k(X_{i_1},\ldots,X_{i_k})g_k(X_{i_{k+1}},\ldots,X_{i_{2k}}))\right|\\
&\leq C \sum_{l=0}^n\sum_{\stackrel{i_1,\ldots,i_{2k},}{\max\lbrace i_{(2)}-i_{(1)},i_{(2k)}-i_{(2k-1)}\rbrace=l}} \alpha^{\frac{\rho}{1+2\rho}}(l),
\end{align*}
by application of Lemma \ref{covineq}.

For a further simplification we calculate via combinatorical arguments the quantity of the summands of the inner sum, that is the quantity of tuples $(i_1,\ldots,i_{2k})$ where $\max\lbrace i_{(2)}-i_{(1)},i_{(2k)}-i_{(2k-1)}\rbrace=l$. At first there are $(2k)!$ possibilities for a $2k$-tuple to get the same ordered sequence $i_{(1)},\ldots,i_{(2k)}$. Now we choose $i_{(1)}$ and $i_{(2k)}$ fixed and have $n^2$ possibilities for doing so. Through the requirement $\max\lbrace i_{(2)}-i_{(1)},i_{(2k)}-i_{(2k-1)}\rbrace=l$ we can also calculate the remaining possibilities for $i_{(2)}$ and $i_{(2k-1)}$. Suppose $i_{(2)}-i_{(1)}=\max\lbrace i_{(2)}-i_{(1)},i_{(2k)}-i_{(2k-1)}\rbrace=l$ then $i_{(2)}$ is automatically determined by the established choice of $i_{(1)}$. Because the requirement on the maximum still has to be fulfilled, $i_{(2k-1)}$ can only take $l$ distinct values. In the other case $i_{(2k)}-i_{(2k-1)}=\max\lbrace i_{(2)}-i_{(1)},i_{(2k)}-i_{(2k-1)}\rbrace=l$ we come to the same result. All remaining values of the $k$-tuple are arbitrary. Consequently the inner sum altogether is $(2k)!\cdot n^2ln^{2k-4}=l\cdot(2k)!\cdot n^{2k-2}$ and therefore

\begin{align*}
&\sum_{i_1,\ldots,i_{2k}=1}^n\left|\mathbb{E}(g_k(X_{i_1},\ldots,X_{i_k})g_k(X_{i_{k+1}},\ldots,X_{i_{2k}}))\right|\\
&\leq C'n^{2k-2}\sum_{l=0}^nl\alpha^{\frac{\rho}{1+2\rho}}(l)=O(n^{2k-2+\gamma}). 
\end{align*} 
\end{proof}

\vspace*{0.3cm}

We also need results concerning the remaining terms of the Hoeffding decomposition for $U$-processes. In this case we of course do not need simple convergence against zero, but since we consider processes need to have convergence of the supremum.

The following lemma was proved by \cite{Wen2011} for the case $m=2$. We will modify the main idea of the proof to obtain a similar result for the degenerated terms of higher dimensional $U$-processes.

\begin{lemma}\label{remterm}
$\newline$
Let $h$ be a kernel satisfying the extended uniform variation condition, such that the $U$-distribution function $U$ is Lipschitz-continuous. Moreover let 
 $(X_n)_{n\in\mathbb{N}}$ be a sequence of strong mixing random variables with mixing coefficients $\alpha(l)=O(l^{-\delta})$ for $\delta\geq 8$ and $\mathbb{E}\lvert X_i\rvert^{\rho}<\infty$ for a $\rho>\frac{1}{4}$.
 Then for all $2\leq k \leq m$ and $\gamma=\frac{\delta-2}{\delta}$ we have 
\begin{align*}
\sup\limits_{t\in\mathbb{R}}\big\vert\sum_{1\leq i_1,\ldots,i_k\leq n}g_k(X_{i_1},\ldots,X_{i_k},t)\big\vert=o(n^{k-\frac{1}{2}-\frac{\gamma}{8}})~\text{a.s.}.
\end{align*}
\end{lemma}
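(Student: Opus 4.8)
The plan is to follow the scheme used by \cite{Wen2011} in the bivariate case: chain over the index $t$, estimate the $k$-fold degenerate sum at a fixed $t$ by a high-order moment inequality, and conclude by the Borel--Cantelli lemma. Write $\phi_n(t):=\sum_{1\le i_1,\ldots,i_k\le n}g_k(X_{i_1},\ldots,X_{i_k},t)$. First I would discard the tuples with a repeated index: there are $O(n^{k-1})=o(n^{k-\frac12-\frac{\gamma}{8}})$ of them and $g_k$ is bounded, so it suffices to treat distinct indices. Next I would reduce $\sup_{t\in\mathbb{R}}|\phi_n(t)|$ to a maximum over a finite grid. Since $h$ is Lipschitz and $\mathbb{E}|X_1|^{\rho}<\infty$ one has the tail bound $1-H_F(t)+H_F(-t)=O(|t|^{-\rho})$; consequently, for $T_n$ a large enough fixed power of $n$ one has, on an event $E_n$ with $\sum_n\mathbb{P}(E_n^c)<\infty$, that $\sup_{|t|>T_n}|\phi_n(t)|=o(n^{k-\frac12-\frac{\gamma}{8}})$, so this region is negligible almost surely after a first Borel--Cantelli step. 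Inside the window I would take grid points $t_{n,0}\le\cdots\le t_{n,N_n}$ with $H_F$-increments $N_n^{-1}$, $N_n=\lceil n^{\kappa}\rceil$. As the kernel $1_{[h(\cdot)\le t]}$ is nondecreasing in $t$, for $t_{n,j-1}\le t<t_{n,j}$ the difference $\phi_n(t)-\phi_n(t_{n,j-1})$ is the order-$k$ Hoeffding term of the bounded kernel $1_{[t_{n,j-1}<h(\cdot)\le t]}$, whose $U$-value is at most $N_n^{-1}$; applying a variance estimate of the type of Lemma~\ref{finalcov} to this difference kernel, using that its $L^2$-norm is $\lesssim N_n^{-1/2}$, together with a union bound over the $N_n$ intervals, shows that the between-grid oscillation is $o(n^{k-\frac12-\frac{\gamma}{8}})$ almost surely for any fixed $\kappa>0$.

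The core step is a moment bound at a fixed $t$: for every integer $p$ I would establish
$$\mathbb{E}\big(\phi_n(t)^{2p}\big)\le C_p\,n^{p(2k-2+\gamma')},$$
where $\gamma'$ is the exponent in $\sum_{l}l\,\alpha^{\rho/(2\rho+1)}(l)=O(n^{\gamma'})$; under the present hypotheses ($\delta\ge8$, $\rho>\tfrac14$) one computes $\gamma'\le\tfrac23$, while the exponent $\gamma=\tfrac{\delta-2}{\delta}$ in the statement satisfies $\tfrac34\le\gamma<1$, so in particular $\sum_l l\,\alpha^{\rho/(2\rho+1)}(l)=O(n^{\gamma})$ and Lemma~\ref{finalcov} is applicable. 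To prove the moment bound one expands the $2p$-th power into a $2pk$-fold index sum and organizes it by the clustering pattern of the indices: each time the sorted $2pk$ indices split into two blocks separated by a gap $l$, the coupling argument behind Lemma~\ref{covineq} (applied to the $g_k$, which inherit the extended variation condition by Lemma~\ref{varcon}, and iterated over the successive gaps) produces a factor $\alpha^{\rho/(2\rho+1)}(l)$, and the counting used in the proof of Lemma~\ref{finalcov} disposes of each configuration. The point where the kernel structure matters is the complete degeneracy $\mathbb{E}\big(g_k(x_1,\ldots,x_{k-1},Y_k)\big)=0$: it forces any configuration containing an index that appears in only one of the $2p$ factors to carry an extra $\alpha$-factor (such terms vanish identically when the data are i.i.d.), which is what keeps the exponent at $p(2k-2+\gamma')$ rather than the trivial $2pk$. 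Since $g_k$ is bounded, $C_p<\infty$ for every $p$.

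Finally I would put the pieces together. By Markov's inequality, $\mathbb{P}\big(|\phi_n(t)|>\varepsilon n^{k-\frac12-\frac{\gamma}{8}}\big)\le C_p\varepsilon^{-2p}n^{p(2k-2+\gamma')-2p(k-\frac12-\frac{\gamma}{8})}=C_p\varepsilon^{-2p}n^{-p(1-\gamma'-\frac{\gamma}{4})}$, and $1-\gamma'-\tfrac{\gamma}{4}\ge1-\tfrac23-\tfrac14=\tfrac1{12}>0$, so this probability decays with an exponent proportional to $p$. Hence
$$\mathbb{P}\Big(\sup_{t\in\mathbb{R}}|\phi_n(t)|>\varepsilon n^{k-\frac12-\frac{\gamma}{8}}\Big)\le N_n\max_{0\le j\le N_n}\mathbb{P}\Big(|\phi_n(t_{n,j})|>\tfrac{\varepsilon}{4}n^{k-\frac12-\frac{\gamma}{8}}\Big)+R_n,$$
where $R_n$ collects the oscillation and tail contributions already made summable; with $N_n=n^{\kappa}$ the first term is $O\big(n^{\kappa-p/12}\big)$, which is summable in $n$ once $p>12(\kappa+1)$. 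Borel--Cantelli then gives $n^{-(k-\frac12-\frac{\gamma}{8})}\sup_{t\in\mathbb{R}}|\phi_n(t)|\to0$ almost surely, which is the claim. I expect the main obstacle to be the moment bound of the middle paragraph: making precise, for a product of $2p$ multivariate completely degenerate kernels under strong mixing, how the degeneracy of $g_k$ interacts with the gap structure of the $2pk$ indices so that the exponent $p(2k-2+\gamma')$ is attained. This is exactly the \emph{additional difficulties caused by the dependencies in the kernel structure} mentioned in the introduction, and is where the argument genuinely departs from the case $m=2$.
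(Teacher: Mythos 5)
There are two genuine gaps in your argument. The more serious one is the control of the oscillation of $\phi_n(t)$ \emph{inside} a grid cell. You correctly note that for fixed $t\in[t_{n,j-1},t_{n,j})$ the difference $\phi_n(t)-\phi_n(t_{n,j-1})$ is the order-$k$ degenerate projection of the kernel $1_{[t_{n,j-1}<h(\cdot)\le t]}$, but this projection is \emph{not} monotone in $t$ (only $\tilde h_k(\cdot,t)$ and $U(t)$ are), so a variance estimate at the endpoints plus a union bound over the $N_n$ cells says nothing about $\sup_{t\in[t_{n,j-1},t_{n,j})}$, which runs over uncountably many $t$. The paper resolves this by sandwiching the monotone quantity $\sum\mathbb{E}(h(\cdots,t)\mid X_{i_1},\ldots,X_{i_k})$ between its values at the cell endpoints and then subtracting the Hoeffding expansion; this unavoidably reintroduces the lower-order degenerate sums $\sum g_j(\cdot,t)$, $j<k$, evaluated at the endpoints, which is precisely why the paper's proof is an induction on $k$ (using the already-established sup-bounds for $g_1,\ldots,g_{k-1}$, with the $g_1$ increment handled by a fourth-moment maximal inequality of Moricz type). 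Your sketch contains neither the induction nor the lower-order correction terms, and without them the reduction to the grid does not go through.

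The second gap is the moment bound $\mathbb{E}(\phi_n(t)^{2p})\le C_p n^{p(2k-2+\gamma')}$ for arbitrary $p$, which you yourself flag as the main obstacle. You need it because you run Borel--Cantelli over $n$ with a polynomial number of grid points, so a second moment ($p=1$) is not summable; but Lemmas \ref{covineq} and \ref{finalcov} only deliver second moments, and extending the coupling/gap-counting argument to products of $2p$ degenerate multivariate kernels under strong mixing with the stated exponent is a substantial unproven claim, not a routine iteration. The paper avoids this entirely: it uses only the second-moment bound of Lemma \ref{finalcov}, chains dyadically in the sample size $n$ (a maximal inequality over $n\in[2^l,2^{l+1})$), takes only $s_l=O(2^{5l/8})$ grid points, and applies Chebychev and Borel--Cantelli along the dyadic blocks $l$, for which summability of $2^{-cl}$ with any $c>0$ suffices. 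So while your overall skeleton (grid, moment bound, Borel--Cantelli) superficially resembles the paper's, the two load-bearing steps are either missing or replaced by an unestablished inequality.
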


\begin{proof}
$\newline$
We define $Q_n^k(t):=\sum_{1\leq i_1,\ldots,i_k\leq n}h_k(X_{i_1},\ldots,X_{i_k},t)$. 

For $l \in \mathbb{N}$ choose $t_{1,l},\ldots,t_{s-1,l}$ with $s=s_l=O(2^{\frac{5}{8}l})$, such that 
\begin{align*}
-\infty=t_{0,l}<t_{1,l}<\ldots<t_{s-1,l}<t_{s,l}=\infty
\end{align*}
and $2^{-\frac{5}{8}l}\leq \lvert U(t_{r,l}-U(t_{r-1,l})\rvert\leq 2\cdot 2^{-\frac{5}{8}l}$. Since we required Lipschitz-continuity of $U$ it follows that $2^{-\frac{5}{8}l}\leq C\lvert t_{r,l}-t_{r-1,l}\rvert$. Moreover, because $h$ is non-decreasing in $t$, 

$\mathbb{E}\left(h(Y_{1},\ldots,Y_{k},Y_{k+1},\ldots Y_m,t)\vert Y_1=X_{i_1},\ldots,Y_k=X_{i_k}\right)$ is non-decreasing in $t$ for all $2\leq k \leq m$. We proceed by induction.

\vspace*{0,1cm}
The case $k=2$ was treated by \cite{Wen2011} and is therefore omitted here.

\vspace*{0.25cm}
From now on suppose that the statement of the lemma is valid for $k-1$. 

Together with the above consideration we have for every $t\in [t_{r-1,l},t_{r,l}]$ and $2^l\leq n < 2^{l+1}$
\begin{align*}
&\lvert Q_n^k(t)\rvert \\
=&\big\vert \sum_{1\leq i_1<\ldots <i_k\leq n}\big(\mathbb{E}(h(Y_1,\ldots Y_k,Y_{k+1},\ldots,Y_m,t)\vert Y_1=X_{i_1},\ldots,Y_k=X_{i_k})\\
&\phantom{\big\vert \sum_{1\leq i_1<\ldots <i_k\leq n}\big(\big)}-g_1(X_{i_1},t)-\cdots-g_1(X_{i_k},t)\\
&\phantom{\big\vert \sum_{1\leq i_1<\ldots <i_k\leq n}\big(\big)}-g_2(X_{i_1},X_{i_2},t)-\cdots-g_2(X_{i_{k-1}},X_{i_k},t)-\cdots-U(t)\big)\big\vert\\
\leq& \max\bigg\{\big\vert \sum_{1\leq i_1<\ldots<i_k\leq n}\big(\mathbb{E}(h(X_{i_1},\ldots,X_{i_k},Y_{i_{k+1}},\ldots,Y_m,t_{r,l})\\
&\phantom{\big\vert \sum_{1\leq i_1<\ldots <i_k\leq n}\big(\big)}-g_1(X_{i_1},t_{r,l})-\ldots -g_1(X_{i_k},t_{r,l})\\
&\phantom{\sum_{1\leq i_1<\ldots <i_k\leq n}}-g_2(X_{i_1},)-\cdots-g_2(X_{i_{k-1}},X_{i_k},t_{r,l})-\cdots-U(t_{r,l}))\big)\big\vert,\\
&\phantom{\max\bigg\{}\big\vert \sum_{1\leq i_1<\ldots <i_k\leq n}\big(\mathbb{E}(h(X_{i_1},\ldots,X_{i_k},Y_{i_{k+1}},\ldots,Y_m,t_{r-1,l}))\\
&\phantom{\big\vert \sum_{1\leq i_1<\ldots <i_k\leq n}\big(\big)}-g_1(X_{i_1},t_{r-1,l})-\ldots -g_1(X_{i_k},t_{r-1,l})\\
&\phantom{\sum_{1\leq i_1<\ldots <i_k\leq n}}-g_2(X_{i_1},X_{i_2},t_{r-1,l})-\cdots-g_2(X_{i_{k-1}},X_{i_k},t_{r-1,l})\\
&\phantom{\big\vert \sum_{1\leq i_1<\ldots <i_k\leq n}\big(hallowelt\big)}-\cdots-U(t_{r-1,l})\big)\big\vert\bigg\}\\
&+\binom{n-1}{k-1}\max\left\{\left\vert\sum_{i=1}^n(g_1(X_i,t_{r,l})-g_1(X_i,t))\right\vert,\right.\\
&\phantom{+\binom{n-2}{k-2}\max\sum_{i=1}^n(g_2(X_{i_1},))}\left. \left\vert\sum_{i=1}^n(g_1(X_i,t)-g_1(X_i,t_{r-1,l}))\right\vert\right\}\\
&+\binom{n-2}{k-2}\max\left\{\left\vert\sum_{i=1}^n(g_2(X_{i_1},)-g_2(X_{i_1},X_{i_2},t))\right\vert,\right.\\
&\left.\phantom{+\binom{n-2}{k-2}\max\sum_{i=1}^n(g_2(X_{i_1},))} \left\vert\sum_{i=1}^n(g_2(X_{i_1},X_{i_2},t)-g_2(X_{i_1},X_{i_2},t_{r-1,l}))\right\vert\right\}\\
&+\cdots+\binom{n}{k}\lvert U(t_{r,l})-U(t_{r-1,l})\rvert\\
\leq &  \max\lbrace \lvert Q_n^k(t_{r,l})\rvert,\lvert Q_n^k(t_{r-1,l})\rvert\rbrace\\
&+\binom{n-1}{k-1}\left\vert\sum_{i=1}^n(g_1(X_i,t_{r,l})-g_1(X_i,t_{r-1,l}))\right\vert\\
&+\binom{n-2}{k-2}\left\vert \sum_{1\leq i_1<i_2\leq n} \left(g_2(X_{i_1},X_{i_2},t_{r,l})-g_2(X_{i_1},X_{i_2},t_{r-1,l})\right)\right\vert\\
&+\cdots+\binom{n-(k-1)}{k-(k-1)}\big\vert\sum_{1\leq i_1<\ldots<i_{k-1}\leq n} \left(g_{k-1}(X_{i_1},\ldots,X_{i_{k-1}},t_{r,l})\right.\\
&\phantom{+\binom{n-(k-1)}{k-(k-1)}\left\vert\sum_{1\leq i_1<\ldots<i_{k-1}\leq n} \left(g_{k-1}\right)\right\vert}\left.-g_{k-1}(X_{i_1},\ldots,X_{i_{k-1}},t_{r-1,l})\right)\big\vert\\
&+\binom{n}{k}\lvert U(t_{r,l})-U(t_{r-1,l})\rvert.
\end{align*}

Again we will treat the first, second and last summand separately.

For the first summand follows
\begin{align*}
&\mathbb{E}\left(\max\limits_{n=2^l,\ldots,2^{l+1}-1}\max\limits_{r=0,\ldots,s}\lvert Q_n^k(t_{r,l})\rvert^2\right)\\
&\leq \sum_{r=0}^{s}\mathbb{E}\left(\left(\sum_{d=0}^l\max\limits_{i=1,\ldots,2^{l-d}}\lvert Q_{2^l+i2^d}^k(t_{r,l})-Q_{2^l+(i-1)2^d}^k(t_{r,l})\rvert\right)^2\right)\\
&\leq \sum_{r=0}^sl\sum_{d=0}^l\sum_{i=1}^{2^{-d}}\mathbb{E}\left(\left(Q_{2^l+i2^d}^k(t_{r,l})-Q_{2^l+(i-1)2^d}^k(t_{r,l})\right)^2\right)\\
&\leq \sum_{r=0}^sl\sum_{d=0}^l\underbrace{\sum_{i_1,\ldots,i_{4}=1}^{2^{l+1}}\lvert \mathbb{E}\left(g_k(X_{i_1},\ldots,X_{i_k},t)g_k(X_{i_{k+1}},\ldots,X_{i_{2k}},t)\right)\rvert}_{=O((2^{l+1})^{2k-2+\gamma})\text{, with Lemma \ref{finalcov}}}\\
&\leq sl^2 C 2^{(2k-2)(l+1)}\leq C'l^22^{(2k-2+\frac{5}{8})l}.
\end{align*}

For the first inequality we used the so called chaining technique: via the triangular inequality we parted the term $Q_n$ into two differences $Q_{2^l+i2^d}-Q_{2^l+(i-1)2^d}$.

Now we apply the Chebychev inequality getting for every $\epsilon>0$
\begin{align*}
&\sum_{l=1}^\infty\mathbb{P}\left(\max\limits_{n=2^l,\ldots,2^{l+1}-1}\max\limits_{r=0\ldots,s}\lvert Q_n^k(t_{r,l})\rvert>\epsilon2^{l(k-\frac{1}{2}-\frac{\gamma}{8})}\right)\\
&\leq\sum_{l=1}^\infty\frac{1}{\epsilon^22^{l(2k-1-\frac{\gamma}{4})}}\mathbb{E}\left(\max\limits_{n=2^l,\ldots,2^{l+1}-1}\max\limits_{r=0\ldots,s}\lvert Q_n^k(t_{r,l})\rvert^2\right)\\
&\leq \sum_{l=1}^\infty \frac{1}{\epsilon^22^{l(2k-1-\frac{\gamma}{4})}}C'l^22^{(2k-2+\frac{5}{8})l}\leq \sum_{l=1}^\infty C'\frac{l^2}{\epsilon^2}2^{\frac{-3+2\gamma}{8}l}<\infty.
\end{align*}
Then with the Borel-Cantelli Lemma
\begin{align*}
\mathbb{P}\left(\max\limits_{n=2^l,\ldots,2^{l+1}-1}\max\limits_{r=0,\ldots,s}\lvert Q_n^2(t_{r,l})\rvert>\epsilon2^{l(k-\frac{1}{2}-\frac{\gamma}{8})} \text{ infinitely often}\right)=0.
\end{align*}
That is, $\max\limits_{r=0,\ldots,s}\lvert Q_n^k(t_{r,l})\rvert=o(n^{k-\frac{1}{2}-\frac{\gamma}{8}})$.

Now we will treat the second summand for which we want to apply Lemma 4.2.1 of \cite{Wen2011}. For $2^l\leq n <2^{l+1}$ it follows
\begin{align*}
&\mathbb{E}\left(\sum_{i=1}^n\left(g_1(X_i,t_{r,l})-g_1(X_i,t_{r-1,l})\right)\right)^4\\
&\leq Cn^2(\log n)^2\max\left\{\mathbb{E}\lvert g_1(X_i,t_{r,l})-g_1(X_i,t_{r-1,l})\rvert, Cn^{-\frac{3}{4}}\right\}^{1+\gamma}\\
&\leq Cn^2(\log n)^2(Cn^{-\frac{3}{4}})^{1+\gamma}.
\end{align*}
By usage of the assumption  $\lvert U(t_{r,l})-U(t_{r-1,l})\rvert \geq 2^{-\frac{5}{8}l} \geq C2^{-\frac{3}{4}l}\geq Cn^{-\frac{3}{4}}$, the last term simplifies to
\begin{align*}
Cn^2(\log n)^2\lvert U(t_{r,l})-U(t_{r-1,l})\rvert^{1+\gamma}.
\end{align*}
All in all we get
\begin{align*}
&\mathbb{E}\left(\max\limits_{n=2^l,\ldots,2^{l+1}-1}\max\limits_{r=1\ldots,s}\binom{n-1}{k-1}\lvert\sum_{i=1}^n\left(g_1(X_i,t_{r,l})-g_1(X_i,t_{r-1,l})\right)\rvert\right)^4\\
&\leq n^{4(k-1)} \sum_{r=1}^s \mathbb{E}\left(\max\limits_{n=2^l,\ldots,2^{l+1}-1}\lvert\sum_{i=1}^n\left(g_1(X_i,t_{r,l})-g_1(X_i,t_{r-1,l})\right)\rvert\right)^4\\
&\leq n^{4(k-1)} \sum_{r=1}^s Cn^2(\log n)^2\lvert U(t_{r,l})-U(t_{r-1,l})\rvert^{1+\gamma}\\
&\leq 2^{4(k-1)(l+1)}Cn^2(\log n)^2s\left(\max\limits_{r=1\ldots,s}\lvert U(t_{r,l})-U(t_{r-1,l})\rvert\right)^{1+\gamma}\\
&\leq C'(l+1)^22^{(4k-2-\frac{5}{8}\gamma)l}.
\end{align*}
Thereby we used Corollary 1 of \cite{Mor1983} and the assumption $s=O(2^{\frac{5}{8}l})$. 

Analogously to the above calculation we again apply the generalized Chebychev Inequality and the Borel-Cantelli Lemma getting 

\begin{align*}
\binom{n-1}{k-1}\left\vert\sum_{i=1}^n\left(g_1(X_i,t_{r,l})-g_1(X_i,t_{r-1,l})\right)\right\vert=o(n^{k-\frac{1}{2}-\frac{1}{8}\gamma}).
\end{align*}
For the last summand, using the assumptions and the fact that  $\gamma<1$, we have
\begin{align*}
&\max\limits_{r=0,\ldots,s}\binom{n}{k}\lvert U(t_{r,l})-U(t_{r-1,l})\rvert\leq Cn^k2^{-\frac{5}{8}l}\leq Cn^{k-\frac{5}{8}}<Cn^{k-\frac{4}{8}-\frac{1}{8}\gamma}\\
&\phantom{\max\limits_{r=0,\ldots,s}\binom{n}{k}\lvert U(t_{r,l})-U(t_{r-1,l})\rvert\leq Cn^k2^{-\frac{5}{8}l}}=o(n^{k-\frac{4}{8}-\frac{1}{8}\gamma}).
\end{align*}
\vspace*{0.25cm}

Now the terms including $g_2,\ldots,g_{k-1}$ remain. For these we know for $2\leq j\leq k-1$
 
\begin{align*}
\sup\limits_{t\in\mathbb{R}}\bigg\vert\sum_{1\leq i_1,\ldots,i_j\leq n}g_j(X_{i_1},\ldots,X_{i_j},t)\bigg\vert=o(n^{j-\frac{1}{2}-\frac{\delta-2}{8\delta}})
\end{align*}
 and consequently
\begin{align*}
 &\binom{n-j}{k-j}\max\limits_{r=1\ldots,s}\lvert \sum_{1\leq i_1<\ldots<i_j\leq n} \left(g_j(X_{i_1},\ldots,X_{i_j},t_{r,l})-g_j(X_{i_1},\ldots,X_{i_j},t_{r-1,l})\right)\rvert\\
 &\leq n^{k-j}\left(\max\limits_{r=1\ldots,s}\lvert \sum_{1\leq i_1<\ldots<i_j\leq n}g_j(X_{i_1},\ldots,X_{i_j},t_{r,l})\rvert\right.\\
& \phantom{\leq n^{k-j}\left(\right)\max\limits_{r=}\lvert \sum_{1\leq i_1<\ldots<i_j\leq n}\rvert}+ \left.\max\limits_{r=1\ldots,s}\lvert \sum_{1\leq i_1<\ldots<i_j\leq n}g_j(X_{i_1},\ldots,X_{i_j},t_{r-1,l})\rvert\right)\\
 &\leq n^{k-j} o(n^{j-\frac{1}{2}-\frac{1}{8}\frac{\delta-2}{\delta}})= o(n^{k-\frac{1}{2}-\frac{1}{8}\frac{\delta-2}{\delta}}).
\end{align*}

So we could show for arbitrary $k$ and all sumands that they are of order $o(n^{k-\frac{1}{2}-\frac{1}{8}\frac{\delta-2}{\delta}})$. Using mathematical induction the proof is completed. 
\end{proof}

\section{Proofs}
\label{sec:7}

In this section we give the missing proofs of the main results stated in Section \ref{sec:2}.

\vspace*{0.1cm}

\begin{proof}[Theorem \ref{main}]

For the main proof we have to show that the following three conditions are fulfilled. \cite{Serf1984} has already proved that these conditions together are sufficient to show asymptotic normality. From there one can see that independence is not required, if these conditions are fulfilled. Some of the lemmas used for proving this theorem can also be found in \cite{Cho1988}.

\begin{compactenum}[(i)]
\item 
For $W_{H_n,H_F}(y)=\left(\frac{\int_0^{H_n(y)}{J(t)dt}-\int_0^{H_F(y)}{J(t)dt}}{H_n(y)-H_F(y)}-J(H_F(y))\right)$ holds

$\lVert W_{H_n,H_F} \rVert_{L_1}=o_p(1)$ and it is $\lVert H_n-H_F \rVert_{\infty}=O_p(n^{-\frac{1}{2}})$.
\item For the remainder term $R_{p_i,n}=\hat{\xi}_{p_i,n}-\xi_{p_i}+\frac{p_i-H_n(\xi_{p_i})}{h_f(\xi_{p_i})}$ of the Bahadur representation of an empirical quantile holds 
\begin{align*}
R_{p_i,n}=o_p(n^{-\frac{1}{2}}).
\end{align*}
\item For a $U$-statistic with kernel 
\begin{align*}
A(x_1,\ldots,x_m)=&-\int_{-\infty}^{\infty}{\left(1_{\left[h(x_1,\ldots,x_m)\leq y\right]}-H_F(y)\right)J(H_F(y))dy}\\
&+\sum_{i=1}^{d}{a_i\frac{p_i-1_{\left[h(x_1,\ldots,x_m)\leq H_F^{-1}(p_i)\right]}}{h_F(H_F^{-1}(p_i))}}
\end{align*}
 we have
\begin{align*}
\sqrt{n}(U_n(A)-\theta)\stackrel{D}{\longrightarrow}N(0,\sigma^2).
\end{align*}
\end{compactenum}

\textit{Proofs of the conditions}

Now we show that the conditions (i)-(iii) are satisfied.
\vspace*{0.15cm}

 For the first part of condition (i) we refer to Lemma 8.2.4.A of \cite{serf1980}. Although he demands independence of the random variables in his proof this property is not needed.
The second part of condition (i) follows from Corollary \ref{glican} .

\vspace*{0.15cm}

Condition (ii) is fulfilled by Lemma \ref{bahad}.

\vspace*{0.15cm}

It remains to show that condition (iii) is satisfied. 

For this we apply Theorem \ref{asynom}. We merely have to verify, whether $A$ satisfies the assumptions for the kernel, that is (a) $A$ is bounded and (b) satisfies the extended variation condition.
\vspace*{0.25cm}
We consider again the kernel $A$
\begin{align*}
A(x_1,\ldots,x_m)=&-\int_{-\infty}^{\infty}\left(1_{[h(x_1,\ldots,x_m)\leq y]}-H_F(y)\right)J(H_F(y))dy\\
&+\sum_{i=1}^d a_i\frac{p_i-1_{[h(x_1,\ldots,x_m)\leq H_F^{-1}(p_i)]}}{h_F(H_F^{-1}(p_i))}.
\end{align*}

\vspace*{0.5cm}

\begin{enumerate}[(a)]
\item The boundedness is a result of the continuity of $H_F$ and $J$ and that $J$ vanishes off the interval $[\alpha,\beta]$. 
\item Now we want to show that $A$ satisfies the extended variation condition. We will treat both summands separately, at first for arbitary $y_1,\ldots,y_m$:
\begin{align*}
&\mathbb{E}\left( \sup\limits_{\lVert(x_1,\ldots,x_m)-({y}_1,\ldots,{y}_m)\rVert\leq \epsilon }\lvert A(x_1,\ldots,x_m)-A({y}_1,\ldots,{y}_m)\rvert\right)\\
\leq & \mathbb{E}\bigg( \sup\limits_{\lVert(x_1,\ldots,x_m)-({y}_1,\ldots,{y}_m)\rVert\leq \epsilon }\bigg\vert \int_{-\infty}^{\infty}\left(1_{[h(x_1,\ldots,x_m)\leq y]}-H_F(y)\right)J(H_F(y))dy\\
&~-\int_{-\infty}^{\infty}\left(1_{[h({y}_1,\ldots,{y}_m)\leq y]}-H_F(y)\right)J(H_F(y))dy \bigg\vert\bigg)\\
+& \mathbb{E}\bigg( \sup\limits_{\lVert(x_1,\ldots,x_m)-({y}_1,\ldots,{y}_m)\rVert\leq \epsilon }\bigg\vert
\sum_{i=1}^d a_i\frac{p_i-1_{[h(x_1,\ldots,x_m)\leq H_F^{-1}(p_i)]}}{h_F(H_F^{-1}(p_i))}\\
&~-\sum_{i=1}^d a_i\frac{p_i-1_{[h({y}_1,\ldots,{y}_m)\leq H_F^{-1}(p_i)]}}{h_F(H_F^{-1}(p_i))} \bigg\vert\bigg)\\
\leq & \mathbb{E}\Biggl(\sup\limits_{\lVert(x_1,\ldots,x_m)-({y}_1,\ldots,{y}_m)\rVert\leq \epsilon }\big\vert \int_{-\infty}^{\infty}\bigl(1_{[h(x_1,\ldots,x_m)\leq y]}\\
&\phantom{\mathbb{E}\big( }\underbrace{\phantom{\sup\limits_{\lVert(x_1,\ldots,x_m)-({y}_1,\ldots,{y}_m)\rVert}\big)}-1_{[h({y}_1,\ldots,{y}_m)\leq y]}\bigr)J(H_F(y))dy \big\vert}_{=:A_1(y_1,\ldots,y_m)}\Biggr)\\
+&\mathbb{E}\Biggl( \sup\limits_{\lVert(x_1,\ldots,x_m)-({y}_1,\ldots,{y}_m)\rVert\leq \epsilon }\\
&\phantom{\mathbb{E}\big( }\underbrace{\phantom{\sup}\left\vert
\sum_{i=1}^d a_i\frac{p_i-1_{[h({y}_1,\ldots,{y}_m)\leq H_F^{-1}(p_i)]}-1_{[h(x_1,\ldots,x_m)\leq H_F^{-1}(p_i)]}}{h_F(H_F^{-1}(p_i))} \right\vert}_{=:A_2(y_1,\ldots,y_m)}\Biggr).
\end{align*}

For the verification of the simple variation condition we first treat $A_1$ getting
\begin{align*}
&A_1(X'_1,\ldots,X'_m)\\
&\leq\Bigg\vert\int_{-\infty}^{\infty}J(H_F(y))\\
&\phantom{\leq\left\vert\int_{-\infty}^{\infty}\right\vert}\sup\limits_{\lVert(x_1,\ldots,x_m)-({X'}_1,\ldots,{X'}_m)\rVert\leq \epsilon }\left\vert1_{[h({x}_1,\ldots,{x}_m)\leq y]}-1_{[h({X'}_1,\ldots,{X'}_m)\leq y]}\right\vert dy\Bigg\vert
\end{align*}

Using the Lipschitz-continuity we have
\begin{align*}
&\sup\limits_{\lVert(x_1,\ldots,x_m)-({X'}_1,\ldots,{X'}_m)\rVert\leq \epsilon }\left\vert1_{[h({X'}_1,\ldots,{X'}_m)\leq y]}-1_{[h({x}_1,\ldots,{x}_m)\leq y]}\right\vert\\
=&\begin{cases}
1 &\text{, if } h({X'}_1,\ldots,{X'}_m) \in \left(y-\widetilde{L}\epsilon,
 y+\widetilde{L}\epsilon\right)\\
0 &\text{, else}.
\end{cases}
\end{align*}
One can easily see that $C:=\left\vert\int_{-\infty}^{\infty}J(H_F(y))dy\right\vert$ is bounded.
Therefore 
\begin{align*}
&\mathbb{E}(A_1(X'_1,\ldots,X'_m))\\
&\leq \mathbb{E} \left(\sup\limits_{t\in\mathbb{R}}\left\vert 1_{[h({X'}_1,\ldots,{X'}_m)\in \left(t-\widetilde{L}\epsilon,
    t+\widetilde{L}\epsilon\right)]}\right\vert \cdot \left\vert\int_{-\infty}^{\infty}J(H_F(y))dy\right\vert\right)\\
&\leq\sup\limits_{t\in\mathbb{R}}\left\vert\mathbb{E}\left( 1_{[h({X'}_1,\ldots,{X'}_m)\in \left(t-\widetilde{L}\epsilon,
    t+\widetilde{L}\epsilon\right)]}\right)\right\vert \cdot C\\
&\leq C\cdot\sup\limits_{t\in\mathbb{R}} \left\vert\mathbb{P}\left(h({X'}_1,\ldots,{X'}_m)\in \left(t-\widetilde{L}\epsilon,
 t+\widetilde{L}\epsilon\right)\right)\right\vert\\
 &\leq C\cdot\sup\limits_{t\in\mathbb{R}}\left\vert\int_{t-\widetilde{L}\epsilon}^{t+\widetilde{L}\epsilon}h_F(x)dx\right\vert\leq C\cdot\left(\sup\limits_{x\in\mathbb{R}}h_F(x)\right)2\widetilde{L}\epsilon\leq L\epsilon,
\end{align*}
 since $h_F$ is bounded.
 
\vspace*{0.25cm}

The treatment of $A_2$ is analogous, using the same notation of the supremum as above. Therefore $A$ satisfies the variation condition and using the same arguments for the extended variation condition the proof is finished.
\end{enumerate}

\vspace*{0.15cm}

We have shown conditions (i)-(iii) and so the proof of asymptotic normality is completed. 
\end{proof}

\vspace*{0.2cm}

\begin{proof}[Theorem \ref{bahad}]
$\newline$
Let be $t\in \mathbb{R}$, $\xi_{nt}=\xi_p+tn^{-\frac{1}{2}}, Z_n(t)=\sqrt{n}\frac{H_F(\xi_{nt})-H_n(\xi_{nt})}{h_F(\xi_p)}$ and $V_n(t)=\sqrt{n}\frac{H_F(\xi_{nt})-H_n(\hat{\xi}_{p})}{h_F(\xi_p)}$.

Using $\lvert p-H_n(\hat{\xi}_p)\rvert \leq \tfrac{1}{n}$ we obtain 

\begin{align*}
V_n(t)&=\underbrace{\sqrt{n}\frac{H_F(\xi_p+tn^{-\frac{1}{2}})-p}{h_F(\xi_p)}}_{=:V'_n(t)}+\underbrace{\sqrt{n}\frac{\overbrace{p-H_n(\hat{\xi}_p)}^{=O(n^{-1})}}{h_F(\xi_p)}}_{=O(n^{-\frac{1}{2}})}\longrightarrow t.
\end{align*}

\vspace*{0.5cm}

Next we will show that $Z_n(t)-Z_n(0)\stackrel{P}{\longrightarrow}0$. One can easily see that

\begin{align*}
&Var(Z_n(t)-Z_n(0))\\
&= \frac{n}{h_F^2(\xi_p)}Var\Biggl(\frac{1}{\binom{n}{m}}\sum_{1\leq i_1<\ldots<i_m\leq n}1_{\left[h(X_{i_1},\ldots,X_{i_m})\leq \xi_p+tn^{-\frac{1}{2}}\right]}\\
&\phantom{\frac{n}{h_F^2(\xi_p)}Var\left(\frac{1}{\binom{n}{m}}\sum_{1\leq i_1<\ldots<i_m\leq n}1_{\left[h(X_{i_1},)\right]}\right)}-1_{\left[h(X_{i_1},\ldots,X_{i_m})\leq \xi_p\right]}\Biggr).\\
\end{align*}

 To find bounds for the right hand side, we define $U_n$ and $U_n'$ as 
\begin{align*}
U_n&=\frac{1}{\binom{n}{m}}\sum_{1\leq i_1<\ldots<i_m\leq n}1_{\left[h(X_{i_1},\ldots,X_{i_m})\leq \xi_p+tn^{-\frac{1}{2}}\right]}\\
&=\theta+\sum_{j=1}^{m}\binom{m}{j}\frac{1}{\binom{n}{j}}\sum_{1\leq i_1<\ldots<i_j\leq n}g_k(X_{i_1},\ldots,X_{i_k})
\end{align*}

\begin{align*}
U'_n&=\frac{1}{\binom{n}{m}}\sum_{1\leq i_1<\ldots<i_m\leq n}1_{\left[h(X_{i_1},\ldots,X_{i_m})\leq \xi_p\right]}\\
&=\theta'+\sum_{j=1}^{m}\binom{m}{j}\frac{1}{\binom{n}{j}}\sum_{1\leq i_1<\ldots<i_j\leq n}g'_k(X_{i_1},\ldots,X_{i_k}),
\end{align*}

where $g_k$ and $g'_k$ are the related terms of the Hoeffding decomposition as used before.

Therefore we have 
\begin{align*}
&\sqrt{Var\left(\frac{1}{\binom{n}{m}}\sum_{1\leq i_1<\ldots<i_m\leq n}1_{\left[\xi_p<h(X_{i_1},\ldots,X_{i_m})\leq \xi_p+tn^{-\frac{1}{2}}\right]}\right)}\\
\leq& \sqrt{\underbrace{Var(\theta)}_{=0}}+\sqrt{\underbrace{Var(\theta')}_{=0}}+\sqrt{Var\left(\frac{m}{n}\sum_{i=1}^n(g_1(X_i)-g'_1(X_i))\right)}\\
+&\sqrt{\var\left(\frac{\binom{m}{2}}{\binom{n}{2}}\sum_{1\leq i <j \leq n}g_2(X_i,X_j)\right)}+\sqrt{Var\left(\frac{\binom{m}{2}}{\binom{n}{2}}\sum_{1\leq i <j \leq n}g'_2(X_i,X_j)\right)}\\
+&\ldots+\sqrt{\var\left(\frac{1}{\binom{n}{m}}\sum_{1\leq i_1<\ldots<i_m\leq n}g_m(X_{i_1},\ldots,X_{i_m})\right)}\\
+&\sqrt{\var\left(\frac{1}{\binom{n}{m}}\sum_{1\leq i_1<\ldots<i_m\leq n}g'_m(X_{i_1},\ldots,X_{i_m})\right)}.\\
\end{align*}
We have shown in the proof of Theorem \ref{asynom} that for all $2\leq k \leq m$  it is $$Var\left(\frac{\binom{m}{k}}{\binom{n}{k}}\sum_{1\leq i_1<\ldots<i_k\leq n}g_k(X_{i_1},\ldots,X_{i_k})\right)=O(n^{-2+\gamma})$$
 for a $\gamma<1$, if the kernel is bounded and satisfies the extended variation condition. Analogous to the proof of Corollary \ref{glican} we know that $g(x_1,\ldots,x_m)=1_{\left[h(X_{i_1},\ldots,X_{i_m})\leq \xi_p+tn^{-\frac{1}{2}}\right]}$ and $g'(x_1,\ldots,x_m)=1_{\left[h(X_{i_1},\ldots,X_{i_m})\leq \xi_p\right]}$  satisfy the extended variation condition.

Applying Proposition 1 of \cite{Dou2010} on $g_1(X_i)-g'_1(X_i)$ and $p=2, b=3$ and using $\lVert g_1(X_i)-g'_1(X_i)\rVert_3< \infty$, since the kernels are bounded, we have
\begin{align*}
\mathbb{E}\left \vert\sum_{i=1}^n(g_1(X_i)-g'_1(X_i))\right \vert^2\leq Cn,
\end{align*}
where the constant $$C=4 \left(\int_0^1\left(\min\left\{\sum_{i\geq 0}1_{[u<\alpha(i)]},n\right\}\right)^3du\right)^{\frac{1}{3}}\lVert g_1(X_i)-g'_1(X_i) \rVert^2_3$$
 only depends on $\lVert g_1(X_i)-g'_1(X_i) \rVert_3$, since \cite{Douk2009} proved $$\left(\int_0^1\left(\min\left\{\sum_{i\geq 0}1_{[u<\alpha(i)]},n\right\}\right)^3du\right)^{\frac{1}{3}}<\infty.$$

So we get
\begin{align*}
&\sqrt{\var\left(\frac{1}{\binom{n}{m}}\sum_{1\leq i_1<\ldots<i_m\leq n}1_{\left[\xi_p<h(X_{i_1},\ldots,X_{i_m})\leq \xi_p+tn^{-\frac{1}{2}}\right]}\right)}\\
&\leq \sqrt{\frac{m^2}{n^2}Cn}+2(m-1)\sqrt{O(n^{-2+\gamma})}\leq \frac{Cm^2}{\sqrt{n}}+2(m-1)O(n^{-1+\gamma/2}),
\end{align*}
where the constant $C$ only depends on $\left\Vert g_1(X_i)- g'_1(X_i) \right\Vert_3$.

Let us come back to

\begin{align*}
&Var(Z_n(t)-Z_n(0))\\
&\leq \frac{n}{h^2_F(\xi_p)}\left(\frac{Cm^2}{\sqrt{n}}+2(m-1)O(n^{-1+\gamma/2})\right)^2\\
&\leq \frac{m^2}{h^2_F(\xi_p)} C^2 + \frac{4m^2(m-1)}{h^2_F(\xi_p)}C \sqrt{n}O(n^{-1+\gamma/2})+4(m-1)^2 O(n^{-2+\gamma})\\
&\leq \frac{m^2}{h^2_F(\xi_p)} C^2+ \frac{4m^2(m-1)}{h^2_F(\xi_p)}C O(n^{-\frac{1}{2}+\gamma/2})+4(m-1)^2 O(n^{-2+\gamma}).
\end{align*}

Since $\lvert g_1(X_i)-g'_1(X_i)\rvert\leq 1$ for all $X_i$ and
\begin{align*}
&\lvert g_1(X_i)-g'_1(X_i)\rvert\stackrel{P}{\longrightarrow} 0
\end{align*}
the constant $C$ converges to zero in probability and therefore

$$\var(Z_n(t)-Z_n(0))\stackrel{P}{\longrightarrow}0.$$

\vspace*{0.1cm}

Applying the Chebychev inequality we then have $Z_n(t)-Z_n(0)\stackrel{P}{\longrightarrow}0$ .

Altogether we have for $t\in\mathbb{R}$ and every $\epsilon>0$
\begin{align}
\mathbb{P}(\sqrt{n}(\hat{\xi}_p-\xi_p)&\leq t, Z_n(0)\geq t+\epsilon)=\mathbb{P}(Z_n(t)\leq V_n(t),Z_n(0)\geq t+ \epsilon)\nonumber\\
&\leq\mathbb{P}\left(\lvert Z_n(t)-Z_n(0)\rvert\geq\frac{\epsilon}{2}\right)+\mathbb{P}\left(\lvert V_n(t)-t\rvert\geq \frac{\epsilon}{2}\right)\longrightarrow 0, \nonumber
\end{align}

and analogously
\begin{align*}
\mathbb{P}\left(\sqrt{n}(\hat{\xi}_p-\xi_p)\geq t, Z_n(0)\leq t\right)\longrightarrow 0.
\end{align*}

Using Lemma 1 of \cite{Gho1971} the proof is completed. 
\end{proof}

\vspace*{0.2cm}

\begin{proof}[Theorem \ref{asynom}]
$\newline$
The proof makes use of the Hoeffding decomposition
\begin{align*}
\sqrt{n}(U_n-\theta)=\sqrt{n}\sum_{j=1}^m{\binom{m}{j}\frac{1}{\binom{n}{j}}S_{jn}}.
\end{align*}
We show that the linear part $\frac{m}{\sqrt{n}}\sum_{i=1}^{n}g_1(X_i)$ is asymptotically normal and that the remaining terms converge to $0$ in probability.
$\newline$
If $(X_i)_{i\in\mathbb{N}}$ is strong mixing then this also applies to $(g_1(X_i))_{i\in\mathbb{N}}$, because $g_1$ is measurable (\cite{Koro1989}), and the mixing coefficients are smaller or equal to the original ones.
With these considerations 
 and observing that $(g_1(X_i))_{i\in\mathbb{N}}$ is strong mixing with mixing coefficients $\alpha(l)=O(l^{-\delta})$ for a $\delta>2$ and moreover $\mathbb{E}(g_1(X_i))=0$ and $g_1(X_i)$ is bounded (because $h$ is bounded) we can apply Theorem 1.6 of \cite{Ibra1961} getting $\sigma<\infty$ and
\begin{align*}
\frac{m}{\sqrt{n}}\sum_{i=1}^ng_1(X_i)\stackrel{D}{\longrightarrow}N(0,m^2\sigma^2).
\end{align*}

\vspace*{0.5cm}

It remains to show that the remaining terms of the Hoeffding decomposition are of order $o_P(1)$.
For this we apply Lemma \ref{finalcov} and show $\sum_{l=0}^nl\alpha^{\frac{\rho}{2\rho+1}}(l)=O(n^{\gamma})$ for a $\rho\geq 0$.

Using the assumption $\alpha(l)=O(l^{-\delta})$ for a $\delta>\tfrac{2\rho+1}{\rho}$ we get for a $\gamma<1$

\begin{align*}
\sum_{l=0}^nl\alpha^{\frac{\rho}{2\rho+1}}(l)\leq \sum_{l=1}^nl^{1-\delta{\frac{\rho}{2\rho+1}}}=O(n^{\gamma}).
\end{align*}

Now it is for all $2\leq k\leq m$
\begin{align*}
&\var\left(\sqrt{n}\binom{m}{k}\binom{n}{k}^{-1}\sum_{1\leq i_1<\ldots<i_k\leq n}g_k(X_{i_1},\ldots,X_{i_k})\right)\\
&\leq \frac{m^{2k}k^{\frac{k}{2}}}{n^{2k-1}}\\
&\phantom{\leq \frac{m}{n}}\sum_{1\leq i_1<\ldots <i_k\leq n}\sum_{1\leq i_{k+1}<\ldots<i_{2k}\leq n}\left|\mathbb{E}\left(g_k(X_{i_1},\ldots,X_{i_k})g_k(X_{i_{k+1}},\ldots, X_{i_{2k}})\right)\right|\\
&\leq \frac{m^{2k}k^{\frac{k}{2}}}{n^{2k-1}}\sum_{i_{1},\ldots,i_{2k}=1}^n\left|\mathbb{E}\left(g_k(X_{i_1},\ldots,X_{i_k})g_k(X_{i_{k+1}},\ldots, X_{i_{2k}})\right)\right|\\
&=O(n^{2k-2+\gamma-(2k-1)})=O(n^{-1+\gamma}).
\end{align*}

And so
\begin{align*}
 \var\left(\sqrt{n}\binom{m}{k}\binom{n}{k}^{-1}\sum_{1\leq i_1<\ldots<i_k\leq n}g_k(X_{i_1},\ldots,X_{i_k})\right)\stackrel{n\rightarrow \infty}{\longrightarrow}0
 \end{align*}
and with the Chebychev inequality we obtain
\begin{align*}
\sqrt{n}\binom{m}{k}\binom{n}{k}^{-1}\sum_{1\leq i_1<\ldots<i_k\leq n}g_k(X_{i_1},\ldots,X_{i_k})\stackrel{P}{\longrightarrow}0~\text{ for }n\rightarrow \infty.
\end{align*}

Using the Theorem of Slutsky we get the result of the theorem.  
\end{proof}

\vspace*{0.2cm}

\begin{proof}[Corollary \ref{glican}]
$\newline$
Using the the Hoeffding decomposition we obtain
\begin{align*}
&\sup\limits_{t\in\mathbb{R}}\left\vert\sqrt{n}\left(H_n(t)-H_F(t)\right)\right\vert\\
&=\sup\limits_{t\in\mathbb{R}}\left\vert\sqrt{n}\left(H_F(t)+\sum_{j=1}^{m}\binom{m}{j}\frac{1}{\binom{n}{j}}S_{jn,t}-H_F(t)\right)\right\vert\\
&=\sup\limits_{t\in\mathbb{R}}\big\vert\frac{m}{\sqrt{n}}\sum_{i=1}^{n}g_1(X_i,t)+\sqrt{n}\frac{\binom{m}{2}}{\binom{n}{2}}\sum_{1\leq i<j\leq n}g_2(X_i,X_j,t)\\
&+\ldots+\sqrt{n}\frac{1}{\binom{n}{m}}\sum_{1\leq i_1 <\ldots<i_m\leq n}h_m(X_{i_1},\ldots,X_{i_m},t)\big\vert\\
&\leq \sup\limits_{t\in\mathbb{R}}\left\vert\frac{m}{\sqrt{n}}\sum_{i=1}^{n}g_1(X_i,t)\right\vert+\sup\limits_{t\in\mathbb{R}}\left\vert\sqrt{n}\frac{\binom{m}{2}}{\binom{n}{2}}\sum_{1\leq i<j\leq n}g_2(X_i,X_j,t)\right\vert\\
&+\ldots+\sup\limits_{t\in\mathbb{R}}\left\vert\sqrt{n}\frac{1}{\binom{n}{m}}\sum_{1\leq i_1 <\ldots<i_m\leq n}h_m(X_{i_1},\ldots,X_{i_m},t)\right\vert.
\end{align*}
For the first summand we get, using Theorem \ref{invpri} and the Continuous Mapping theorem,
\begin{align*}
\sup\limits_{t\in\mathbb{R}}\left\vert\frac{m}{\sqrt{n}}\sum_{i=1}^{n}g_1(X_i,t)\right\vert\rightarrow \lVert W \rVert_{\infty}.
\end{align*}
Since $W$ is a continuous Gaussian process we have $\lVert W \rVert_{\infty}=O_p(1)$.

\vspace*{0.25cm}

For the remaining results we want to apply Lemma \ref{remterm}. Therefore the kernel of the $U$-process $g(x_1,\ldots,x_m,t)=1_{[h(x_1,\ldots,x_m)\leq t]}$ has to satisfy the extended uniform variation condition. This can be shown using the Lipschitz-continuity of $h$: 
\begin{align*}
&\sup\limits_{\lVert(x_1,\ldots,x_m)-({X'}_1,\ldots,{X'}_m)\rVert\leq \epsilon }\left\vert 1_{[h({X'}_1,\ldots,{X'}_m)\leq t]}-1_{[h({x}_1,\ldots,{x}_m)\leq t]}\right\vert\\
=&\begin{cases}
1 &\text{, if } h({X'}_1,\ldots,{X'}_m) \in \left(t-L\epsilon,
 t+L\epsilon\right)\\
0 &\text{, else}
\end{cases}
\end{align*}
and so
\begin{align*}
&\mathbb{E}\left(\sup\limits_{\lVert(x_1,\ldots,x_m)-({X'}_1,\ldots,{X'}_m)\rVert\leq \epsilon }\left\vert1_{[h({X'}_1,\ldots,{X'}_m)\leq t]}-1_{[h({x}_1,\ldots,{x}_m)\leq t]}\right\vert\right)\\
&\leq \sup\limits_{t \in \mathbb{R} }\left\vert\mathbb{E}\left(1_{[h({X'}_1,\ldots,{X'}_m)\in (t-L\epsilon,t+L\epsilon)]}\right)\right\vert\\
&\leq \sup\limits_{t \in \mathbb{R} }\left\vert\int_{t-L\epsilon}^{t+L\epsilon}h_F(x)dx\right\vert\leq 2L\epsilon(\sup\limits_{x\in \mathbb{R}}h_F(x))\leq L'\epsilon,
\end{align*}
since $h_F$ is bounded.

Using the arguments above we can also show that $g$ satisfies the extended uniform variation condition.
For arbitrary $2\leq k \leq m$ and $i_1<i_2<\ldots<i_m$
\begin{align*}
&\mathbb{E}\Biggl(\sup\limits_{\lvert x_1-{Y}_{i_1}\rvert\leq \epsilon }\Bigg\vert 1_{[h({Y}_{i_1},X_{i_2},\ldots,X_{i_k},Y_{i_{k+1}},\ldots,{Y}_{i_m})\leq t]}\\
&\phantom{mathbb{E}\Biggl(\sup\limits_{\lvert x_1-{Y}_{i_1}\rvert\leq \epsilon }\Bigg)}-1_{[h({x}_1,X_{i_2},\ldots,X_{i_k},Y_{i_{k+1}},\ldots,{Y}_{i_m})\leq t]}\Bigg\vert\Biggr)\\
&\leq \sup\limits_{t \in \mathbb{R} }\left\vert\int_{t-L\epsilon}^{t+L\epsilon}h_{F;X_{i_1},\ldots,X_{i_k}}(x)dx\right\vert\leq L\epsilon.
\end{align*}

Applying Lemma \ref{remterm} we get for $2\leq k \leq n$

\begin{align*}
&\sup\limits_{t\in\mathbb{R}}\left\vert\sqrt{n}\frac{\binom{m}{k}}{\binom{n}{k}}\sum_{1\leq i_1,\ldots,i_k\leq n}g_k(X_{i_1},\ldots,X_{i_k},t)\right\vert\\
&\leq \sqrt{n} n^{-k} o_p(n^{k-\frac{1}{2}-\frac{\delta-2}{8\delta}})=o_p(n^{-\frac{\delta-2}{8\delta}}).
\end{align*}

\vspace*{0.1cm}
With Slutsky's Theorem the proof is completed. 
\end{proof}

\section*{Acknowledgements}
\label{sec:8}
The financial support of the Deutsche Forschungsgemeinschaft (SFB 823, Statistical modelling
of nonlinear dynamic processes) is gratefully acknowledged.

\nocite{*}
\newpage


\begin{thebibliography}{27}
\providecommand{\natexlab}[1]{#1}
\providecommand{\url}[1]{\texttt{#1}}
\expandafter\ifx\csname urlstyle\endcsname\relax
  \providecommand{\doi}[1]{doi: #1}\else
  \providecommand{\doi}{doi: \begingroup \urlstyle{rm}\Url}\fi

\bibitem[Babu and Singh(1978)]{Bab1978}
G.~J. Babu and K.~Singh.
\newblock On {D}eviations {B}etween {E}mpirical and {Q}uantile {P}rocesses for
  {M}ixing {R}andom {V}ariables.
\newblock \emph{Journal of Multivariate Analysis}, 8:\penalty0 532--549, 1978.

\bibitem[Bahadur(1966)]{Bah1966}
R.~R. Bahadur.
\newblock A {N}ote on {Q}uantiles in {L}arge {S}amples.
\newblock \emph{Annals of Mathematical Statistics}, 35, 1966.

\bibitem[Borovkova et~al.(2001)Borovkova, Burton, and Dehling]{Boro}
S.~Borovkova, R.~Burton, and H.~Dehling.
\newblock Limit {T}heorems for {F}unctionals of {M}ixing {P}rocesses with
  {A}pplications to ${U}$-{S}tatistics and {D}imension {E}stimation.
\newblock \emph{Transactions of the American Mathematical Society},
  353:\penalty0 4261--4318, 2001.

\bibitem[Bradley(2007)]{Brad2007}
R.C. Bradley.
\newblock \emph{Introduction to Strong Mixing Conditions}.
\newblock Kendrick Press 1st edition, 2007.

\bibitem[Brazauskas and Serfling(2000{\natexlab{a}})]{Bra2000}
V.~Brazauskas and R.~Serfling.
\newblock Robust estimation of tail parameters for two-parameter pareto and
  exponential models via generalized quantile statistics.
\newblock \emph{Extremes}, 3.3:\penalty0 231--249, 2000{\natexlab{a}}.

\bibitem[Brazauskas and Serfling(2000{\natexlab{b}})]{Bra22000}
V.~Brazauskas and R.~Serfling.
\newblock Robust and efficient estimation of the tail index of a
  single-parameter pareto distribution.
\newblock \emph{North American Actuarial Journal}, 4:\penalty0 12--27,
  2000{\natexlab{b}}.

\bibitem[Choudhury and Serfling(1988)]{Cho1988}
J.~Choudhury and R.~J. Serfling.
\newblock {G}eneralized {O}rder {S}tatistics, {B}ahadur {R}epresentations, and
  {S}equential {N}onparametric {F}ixes-{W}idth {C}onfidence {Intervals}.
\newblock \emph{Journal of Statistical Planning and Inference}, 19:\penalty0
  269--282, 1988.

\bibitem[Dehling and Philipp(2002)]{Deh2002}
H.~Dehling and W.~Philipp.
\newblock Empirical process techniques for dependent data.
\newblock In \emph{Empirical Process Techniques for Dependent Data}, pages
  3--115, 2002.

\bibitem[Dehling and Wendler(2010)]{Deh2010}
H.~Dehling and M.~Wendler.
\newblock Central {L}imit {T}heorem and the {B}ootstrap for ${U}$-{S}tatistics
  of {S}trongly {M}ixing {D}ata.
\newblock \emph{Journal of Multivariate Analysis}, 101:\penalty0 126--137,
  2010.

\bibitem[Dehling et~al.(1987)Dehling, Denker, and Philipp]{De1987}
H.~Dehling, M.~Denker, and W.~Philipp.
\newblock The {a}lmost {S}ure {I}nvariance {P}rinciple for the {E}mpirical
  {P}rocess of ${U}$-{S}tatistics structure.
\newblock \emph{Annales de l'Institut Henri Pointcare (B): Probability and
  Statistics}, 23:\penalty0 349--382, 1987.

\bibitem[Denker and Keller(1986)]{Denk1986}
M.~Denker and G.~Keller.
\newblock Rigorous {S}tatistical {P}rocedures for {D}ata from {D}ynamical
  {S}ystems.
\newblock \emph{Journal of Statistical Physics}, 44:\penalty0 67--93, 1986.

\bibitem[Doukhan and Lang(2009)]{Douk2009}
GP. Doukhan and G.~Lang.
\newblock Evaluation for moments of a ratio with applications to regression
  estimation.
\newblock \emph{Bernoulli}, 15:\penalty0 1259--1286, 2009.

\bibitem[Doukhan et~al.(2010)Doukhan, Klesov, and Lang]{Dou2010}
P.~Doukhan, O.~Klesov, and G.~Lang.
\newblock Rates of {C}onvergence in some {SLLN} under {W}eak {D}ependence
  {C}onditions.
\newblock \emph{Acta Scientarium Mathematicarum}, 76:\penalty0 683--695, 2010.

\bibitem[Ghosh(1971)]{Gho1971}
J.~K. Ghosh.
\newblock A new {P}roof of the {B}ahadur {R}epresentation of {Q}uantiles and an
  {A}pplication.
\newblock \emph{Annals of Mathematical Statistics}, 42:\penalty0 1957--1961,
  1971.

\bibitem[Hoeffding(1948)]{Hoeff1948}
W.~Hoeffding.
\newblock A {C}lass of {S}tatistics with {A}symptotically {N}ormal
  {D}istribution.
\newblock \emph{Annals of Mathematical Statistics}, 19:\penalty0 293--325,
  1948.

\bibitem[Ibragimov(1961)]{Ibra1961}
I.A. Ibragimov.
\newblock Some {L}imit {T}heorems for {S}tationary {P}rocesses.
\newblock \emph{Theory of Probability and its Applications}, 7:\penalty0
  349--382, 1961.

\bibitem[Korolyuk and Borovskikh(1993)]{Koro1989}
V.S. Korolyuk and Y.V. Borovskikh.
\newblock \emph{The Theory of $U$-Statistics}.
\newblock Springer, 1993.

\bibitem[Moricz(1983)]{Mor1983}
F.~Moricz.
\newblock A {G}eneral {M}oment {I}nequality for the {M}aximum of the
  {R}ectangular {P}artial {S}ums of {M}ultiple {S}eries.
\newblock \emph{Acta Mathematica Hungaria}, 41:\penalty0 337--346, 1983.

\bibitem[Politis and Romano(1994)]{Poli1994}
D.~N. Politis and J.~P. Romano.
\newblock Large {S}ample {C}onfidence {R}egions based on {S}ubsamples under
  {M}inimal {A}ssumptions.
\newblock \emph{The Annals of Statistics}, 22.4:\penalty0 2031--2050, 1994.

\bibitem[Pollard(1984)]{Pol1984}
D.~Pollard.
\newblock \emph{Convergence of Stochastic Processes}.
\newblock Springer, 1984.

\bibitem[Serfling(1980)]{serf1980}
R.~Serfling.
\newblock \emph{{A}pproximation {T}heorems of {M}athematical {S}tatistics}.
\newblock John Wiley and Sons, 1980.

\bibitem[Serfling(1984)]{Serf1984}
R.~Serfling.
\newblock {G}eneralized ${L}$-, ${M}$-, and ${R}$-{S}tatistics.
\newblock \emph{The Annals of Statistics}, 12.1:\penalty0 76--86, 1984.

\bibitem[Serfling(2002)]{Serf2002}
R.~Serfling.
\newblock Efficient and robust fitting of lognormal distributions.
\newblock \emph{North American Actuarial Journal}, 6:\penalty0 95--116, 2002.

\bibitem[Wendler(2011{\natexlab{a}})]{Wen2011}
M.~Wendler.
\newblock \emph{Empirical ${U}$-{Q}uantiles of {D}ependent {D}ata}.
\newblock PhD thesis, Ruhr-Universit\"at Bochum, 2011{\natexlab{a}}.

\bibitem[Wendler(2011{\natexlab{b}})]{Wen2011a}
M.~Wendler.
\newblock {B}ahadur {R}epresentation for ${U}$-{Q}uantiles of {D}ependent
  {D}ata.
\newblock \emph{Journal of Multivariate Analysis}, 102:\penalty0 1064--1079,
  2011{\natexlab{b}}.

\bibitem[Wilde and Grimshaw(2013)]{Wild2013}
S.~Wilde and S.~Grimshaw.
\newblock Efficient computation of generalized median estimators.
\newblock \emph{Computational Statistics}, 28:\penalty0 307--317, 2013.

\bibitem[Yoshihara(1976)]{Yosh1976}
K.~Yoshihara.
\newblock Limiting {B}ehavior of ${U}$-{S}tatistics for {S}tationary,
  {A}bsolutely {R}egular {P}rocesses.
\newblock \emph{Zeitschrift f\"ur Wahrscheinlichkeitstheorie und verwandte
  Gebiete}, 35:\penalty0 237--252, 1976.

\end{thebibliography}
\end{document}